\newtheorem{theorem}{Theorem}
\newtheorem{lemma}{Lemma}
\newtheorem{remark}{Remark}
\newtheorem{assumption}{Assumption}
\newcommand{\hL}{\mathcal{L}}
\newcommand{\hN}{\mathcal{N}}
\newcommand{\cN}{{\cal N}}
\newcommand{\cE}{{\cal E}}
\newcommand{\cZ}{{\cal Z}}
\begin{document}
% \title{An Incentive-based Online Algorithm for Voltage Regulation in Distribution Grids}
\title{An Incentive-Based Online Optimization Framework for Distribution Grids}
\author{Xinyang Zhou, Emiliano Dall'Anese, Lijun Chen, and  Andrea Simonetto %\\ \vspace{.5cm} \textbf{Extended abstract} 
\thanks{X. Zhou and L. Chen are with the College of Engineering and Applied Science, University of Colorado, Boulder, CO 80309, USA  (Emails: \{xinyang.zhou, lijun.chen\}@colorado.edu). E. Dall'Anese is with the National Renewable Energy Laboratory, Golden, CO 80401, USA (Email: emiliano.dallanese@nrel.gov). A. Simonetto is with IBM Ireland, Dublin (Email: andrea.simonetto@ibm.com).}
\thanks{This work was supported by the U.S. Department of Energy under Contract No. DE-AC36-08GO28308 with the National Renewable Energy Laboratory. The U.S. Government retains and the publisher, by accepting the article for publication, acknowledges that the U.S. Government retains a nonexclusive, paid-up, irrevocable, worldwide license to publish or reproduce the published form of this work, or allow others to do so, for U.S. Government purposes. }
\thanks{Preliminary result of this paper was presented at American Control Conference (ACC), Seattle, WA, May 2017 \cite{zhou2017pricing}.}
}

\maketitle

\begin{abstract}
This paper formulates a time-varying social-welfare maximization problem for distribution grids with distributed energy resources (DERs) and develops online distributed algorithms to identify (and track) its solutions. In the considered setting, network operator and DER-owners pursue given operational and economic objectives, while concurrently ensuring that voltages are within prescribed limits. The proposed algorithm affords an \emph{online} implementation to enable tracking of the solutions in the presence of time-varying operational conditions and changing optimization objectives. It involves a strategy where the network operator collects voltage measurements throughout the feeder to build incentive signals for the DER-owners in real time;  DERs then adjust  the generated/consumed powers in order to avoid the violation of the voltage constraints while maximizing given objectives. The stability of the proposed schemes is analytically established and numerically corroborated. 
\end{abstract}

\begin{keywords}
	Voltage regulation, real-time pricing, social welfare maximization, {exact convex relaxation,} distribution networks, time-varying optimization.
\end{keywords}

%%%%%%%%%%%%%%%%%
\section{Introduction}

%%%%%%%%%%%%%%%%%
%%%%%%%%%%%%%%%%%
%\section{Introduction}
%%%%%%%%%%%%%%%%%

%The present paper focuses on distribution systems with high integration of distributed energy resources, and develops an optimal price-based coordination strategy that allows distribution network operators and end-users to pursue given operational and economic objectives, while concurrently ensuring that voltage magnitudes are within prescribed limits. 

Market-based algorithms have been recently developed to control distributed energy assets with the objective of incentivizing end-customers to provide  services to the grid while maximizing economic benefits and performance objectives~\cite{MohsenianRad10,Maharjan13, Pedram14}. For example, end-customers may be incentivized to adjust the output powers of distributed energy resources (DERs) in real time to aid voltage regulation~\cite{Liu08}, control the aggregate network demand~\cite{li2016market}, and follow regulating signals~\cite{Vrettos16}.  

This paper aims to design incentive-based distributed algorithms that allow network operator and end-customers to pursue given operational and economic objectives and, in doing so, ensure that voltage magnitudes are within the prescribed limits. We start with the formulation of a \emph{time-varying} social welfare maximization problem that captures a variety of optimization objectives, hardware constraints, and the nonlinear power-flow equations governing the physics of distribution systems. The time-varying nature of the problem~\cite{SimonettoGlobalsip2014, OPFpursuit} enables us to model optimization and operational objectives  that vary in time and to capture variability of ambient conditions and non-controllable energy assets; the time-varying problem thus defines optimal  \emph{trajectories} for the active and reactive powers of the DERs as well as voltage levels. A linear approximation of the nonlinear power-flow equations~\cite{Baran89, Christakou13, swaroop2015linear, bolognani2015linear} is utilized to facilitate the development of a computationally-tractable algorithms. Even when linear power-flow models are adopted, the resultant problem is non-convex; {however, we  propose a convex relaxation and provide conditions under which the relaxation is exact, i.e., the optimal solutions of the relaxed problem coincide with the global optimal points of the non-convex social-welfare problem.} We then design distributed algorithms to identify (and track) the solutions of the time-varying social welfare maximization problem. {The  algorithm enables a  
 distributed solution of the social-welfare-maximization problem where: (i) customers do not share private information such as their cost function and the feasible  set of the DERs' output powers; and (ii) customers and network operator pursue their own economic and operational objectives, while ensuring that voltage limits are systematically satisfied throughout the network. } 

The first algorithm is applicable to problems that vary slowly in time, where \emph{offline} iterative methods can be utilized to solve sampled instances~\cite{SimonettoGlobalsip2014} of the time-varying problem to convergence.  An \emph{online}  algorithm is then {proposed} to enable tracking of the solutions in the presence of fast time-varying operational conditions and changing optimization objectives. The online algorithm involves a strategy where the network operator collects voltage measurements {through} the feeder to build incentive signals for the DER-owners in real time;  DERs then adjust  the generated/consumed powers in order to avoid the violation of the voltage constraints while maximizing given objectives. Stability of the proposed schemes is analytically established and numerically corroborated. The design of the algorithms is grounded on the  decomposability of  the primal-dual gradient algorithms, and convergence of the distributed algorithm to the optimal solution of the social-welfare maximization problem is shown by leveraging the contraction mapping arguments with a regularized Lagrangian. 

It should be pointed out that, traditionally, voltage regulation problems arising from reverse power flows~\cite{Liu08} have been tackled by considering local Volt/VAR and Volt/Watt controllers~{\cite{farivar2013equilibrium, farivar2015local, zhou2015pseudo, zhou2016incremental, Zhu15, Zhang13, baker2017network}}  or optimization-based techniques~\cite{swaroop2015linear,Farivar12,OID} that leverage the flexibility of power-electronics-interfaced renewable sources of energy in adjusting the output real and reactive powers. Compared to local control strategies, the proposed method allows the network operator and end-customers to pursue {well-defined} performance objectives; compared to existing optimization strategies, the proposed method casts the voltage regulation problem within a pricing/incentive realm, and provides insights as to how to design real-time pricing/incentive schemes.  

Load control  {is} formulated as a Stackelberg game in e.g.,~\cite{chen2012optimal,tushar2015three}, and~\cite{li2016market}, and conditions for convergence of a leader-follower strategy to an equilibrium point {are} derived. Based on a Stackelberg game formulation, a distributed algorithm with only local information available for both utility companies and DER-owners {is} developed in~\cite{Maharjan13}. A two-level game setting {is} considered in~\cite{chai2014demand}. A time-varying pricing strategy is proposed in~\cite{li2011optimal} to maximize social welfare.  However,  the strategies outlined in~\cite{Maharjan13,chen2012optimal,tushar2015three,li2016market,chai2014demand,li2011optimal} and pertinent references therein are network agnostic, in the sense that AC power flows in the power network are ignored (and DERs are assumed to be connected to one single electrical node). It follows that network-agnostic methods do not account for voltage variations induced  by the controlled  DERs and must be complemented by voltage-regulation mechanisms. The frameworks proposed in, e.g.,~\cite{li2011optimal,LinaCDC15} offer a way to account for the power flows, but their applicability is limited to a restricted class of network topologies.

As for online optimization methods for distribution systems,  Centralized controllers  are developed in~\cite{Bernstein15,AndreayOnlineOpt}, based on continuous gradient steering algorithms; the framework accounts for errors in the implementable power setpoints, and convergence of the average setpoints to the minimum of the considered control objective is established. An online gradient algorithm for AC optimal power flow in single-phase radial networks is proposed in~\cite{LowOnlineOPF}; it is shown that the proposed algorithm converges to the set of local optima of a static AC OPF problem, and sufficient conditions under which the online OPF converges to a global optimum are provided.
A real-time control strategy that enables DERs to maximize given performance objectives is proposed in~\cite{OPFpursuit}. The proposed online algorithm is close in spirit to~\cite{OPFpursuit}; {however compared to~\cite{OPFpursuit}, this paper casts the real-time voltage regulation problem within a time-varying game-theoretic framework and {develops} distributed strategies based on pricing/incentive signals;~\cite{OPFpursuit} does not address the design of pricing/incentive signals}. 

The paper is organized as follows. Section~\ref{sec:model} introduces the system model and presents the problem formulation. Section~\ref{sec:static} focuses on the design of iterative algorithms that afford an offline implementation, while Section~\ref{sec:realtime} presents the online algorithm. Section~\ref{sec:scenarios} outlines results from numerical experiments and Section~\ref{sec:conclusions} concludes the paper. Preliminary results were presented in~\cite{zhou2017pricing}.

 \begin{table}
\begin{center}
\caption{Notation.}
\begin{tabular}{ll}   
\hline 
        %$t$ & time index, $t \in \mathcal{T} := \{1,\ldots,T\}$\\
        $\cN$ & Set of nodes, excluding node $0$; $\cN:=\{1, ..., N\}$\\
        $\cE$ & Set of distribution lines\\
        $p_i$ & Net real power injected at node $i$\\
        $q_i$ & Net reactive power injected at node $i$\\
        $z_i$ & Overall power injected at node $i$, $z_i := [p_i,q_i]^{\intercal}$\\
        $\mathcal{Z}_i$ & Feasible set of real and reactive power at node i\\ 
        $p$ & $p := [p_1, \ldots, p_N]^{\intercal}$ \\
        $q$ & $q := [q_1, \ldots, q_N]^{\intercal}$ \\
        $z$ & $z := [p^{\intercal},q^{\intercal}]^{\intercal}$\\
        $V_i$ & Complex voltage at node $i$\\
        $v_i$  &  Voltage magnitude at node $i$\\
        $v$ & $v := [v_1, \ldots, v_N]^{\intercal}$ \\
        $\alpha_i$ & Signal for injected real power for node $i$\\
        $\beta_i$ & Signal for injected reactive power for node $i$\\
        $s_i$ & Overall signal $z_i := [\alpha_i,\beta_i]^{\intercal}$\\       
        $\alpha$ & $\alpha := [\alpha_1, \ldots, \alpha_N]^{\intercal}$ \\
        $\beta$ & $\alpha := [\beta_1, \ldots, \beta_N]^{\intercal}$ \\
        $s$ & Compact signal vector $s := [\alpha^{\intercal},\beta^{\intercal}]^{\intercal}$\\
        $[x]_+$ & Projection of $x$ onto the nonnegative orthant\\   
        $[x]_{\mathcal{Z}}$ & Projection of $x$ onto the convex set $\mathcal{Z}$ \\
        \hline 
\end{tabular}
\vspace{-.5cm}
\end{center}
\end{table}

%%%%%%%%%%%%%%%%%
\section{Preliminaries and System Model}
\label{sec:model}
%%%%%%%%%%%%%%%%%

\subsection{Network Model}

Consider a distribution network with $N+1$ nodes collected in the set $\cN \cup \{0\}$ with $\cN:=\{1, ..., N\}$ and node $0$ being the point of common coupling or substation, and distribution lines collected in the set $\cE$.
Let  $V^t_i \in \mathbb{C}$ denote the line-to-ground voltage at node $i$ at time $t$, and define $v^t_i := |V^t_i|$. Denote as $p^t_i \in \mathbb{R}$ and $q^t_i \in \mathbb{R}$  the (net) active and reactive power injections, respectively, of a distributed energy resource (DER) located at node $i \in \cN$.
For notational simplicity, exposition is tailored to the case where one DER is located at each node; however, the technical approach straightforwardly applies to the case where multiple DERs are connected to a node. {Hereafter, $\mathcal{Z}^t_i$ denotes the feasible set of active and reactive powers $p^t_i$ and $q^t_i$ at node $i \in \cN$ at time $t$. In the following, we explain how to construct this set for some types of DERs. }

\noindent \emph{Photovoltaic (PV) systems}: Let $p_{i,\text{av}}^{t}$ denote the available real power from a PV system at  time $t$, and let $\eta_i$ be the rated apparent capacity. Then, the set $\cZ_i^t$ is given by: 
\begin{align} 
\cZ_i^t =  \left\{(p_i, q_i) \hspace{-.1cm} :  0 \leq p_i  \leq  p_{i,\text{av}}^{t}, p_i^2 + q_i^2 \leq  \eta_i^2 \right\} .\nonumber
\end{align}

{
\noindent \emph{Energy storage systems}: The set $\cZ_i^t$ for an energy storage system is given by: 
\begin{align} 
\cZ_i^t =  \left\{(p_i, q_i) \hspace{-.1cm} :  \underline{p}_i^{t} \leq p_i  \leq  \overline{p}_i^{t}, p_i^2 + q_i^2 \leq  \eta_i^2 \right\} , \nonumber
\end{align} 
for given limits $\underline{p}_i^{t}, \overline{p}_i^{t}$ and for a given inverter rating $\eta_i$. The limits $\underline{p}_i^{t}, \overline{p}_i^{t}$ are updated during the operation of the battery based on the state of charge. 
}

\noindent \emph{Variable frequency drives}: For devices such as water pumps and supply fans of commercial HVAC systems, the set $\cZ_i^t$ can be described as:  
\begin{align} 
\cZ_i^t =  \left\{(p_i, q_i) \hspace{-.1cm} :  \underline{p}_i^{t} \leq p_i  \leq  \overline{p}_i^{t}, q_i = 0 \right\} , \nonumber
\end{align} 
for given  limits $\underline{p}_i^{t}, \overline{p}_i^{t}$. These limits can be fixed or updated by local controllers at a regular time intervals, based on the state of e.g., thermal loads.

{The operating region of small-scale diesel generators can be modeled using constant box constraints.  For DERs with discrete levels of output powers (e.g., electric vehicle chargers with discrete charging levels), $\cZ_i$ represents the convex envelope of the possible operating points; see e.g.,~\cite{AndreayOnlineOpt}. Randomization techniques can then be utilized to recover a feasible setpoint. However, the development of control strategies for DERs with discrete levels of output powers is left as a future research activity. }

Voltages, currents, and powers $\{p^t_i, q^t_i\}$ are related by the well-known nonlinear AC power-flow equations; assuming, for illustrative purpose, a balanced tree network, these equations read:   
\begin{subequations}\label{eq:bfm}
	\begin{eqnarray}
	P_{ij}^t \hspace{-2mm}&=& \hspace{-2mm}  - p^t_j +\!\!\!\! \sum_{k: (j,k)\in \cE}\!\!\!\!\! P^t_{jk}+  r_{ij}  \ell^t_{ij}  \label{p_balance}, \\[-3pt]
	Q^t_{ij} \hspace{-2mm}&=& \hspace{-2mm}  - q^t_j +\!\!\!\!\! \sum_{k: (j,k)\in \cE}\!\!\!\!\! Q^t_{jk} + x_{ij} \ell^t_{ij} \label{q_balance},\\[-3pt]
	{v^{t}_j}^2 \hspace{-2mm}&=& \hspace{-2mm}  {v^{t}_i}^2 - 2 \left(r_{ij} P^t_{ij} + x_{ij} Q^t_{ij}\right) + \left(r_{ij}^2+x_{ij}^2\right) \ell^t_{ij} \label{v_drop},\\
	\ell^t_{ij}{v^{t}_i}^2 \hspace{-2mm}&=& \hspace{-2mm}   {P_{ij}^{t}}^2 + {Q_{ij}^{t}}^2  \label{currents},
	\end{eqnarray}
\end{subequations}
where $\ell^t_{ij}$ is the squared magnitude of the current on line $(i,j)$, $P^t_{ij}, Q^t_{ij}$ are real and reactive powers injected on line $(i,j)$, and $r_{ij} + \mathrm{j} x_{ij}$ is the impedance on line $(i,j)$.

To facilitate the design and analysis of computationally-tractable algorithms, the proposed approach will employ suitable linearization approaches for~\eqref{eq:bfm}. Particularly, the following approximate linear relationship between voltage magnitudes and injected powers is utilized:
\begin{align} 
\hat{v}^t & = R p^t + X q^t + a , \label{eq:approximate} 
\end{align}
{where the parameters $R,X\!\!\! \in \mathbb{R}_{++}^{N\!\times\! N}$ and $a\!\!\in\mathbb{R}^{N}$
can be obtained using one of the two following approaches: }
i)~regression-based methods, based on real-time measurements of $\{v_i^t\}$, $p^t$, and $q^t$, e.g., the recursive least-squares  method~\cite{Angelosante10} can be utilized to continuously update the model parameters; and, ii) suitable linearization methods for the AC power-flow equations; see e.g.,~\cite{Baran89,farivar2013equilibrium,Christakou13,swaroop2015linear,bolognani2015linear}. In the latter case, the model parameters $R$, $X$, and $a$ can be time-varying too, by using current operating points as linearization points for the AC power-flow equations.  Parameters $R$, $X$, and $a$ should  be re-computed every time that the system changes topology. 

The approximate model~\eqref{eq:approximate} is utilized to facilitate the design of computationally-affordable algorithms. Section~\ref{sec:realtime} will  show how to leverage appropriate measurements to cope with approximation errors and systematically enforce voltage limits. 

{
\begin{remark} (Multiphase systems)
For notational and exposition simplicity, the framework is outlined for a single-phase system. However, the proposed algorithmic solution is applicable to unbalanced multiphase networks. This can be obtained by substituting~\eqref{eq:approximate} with the linearized model recently proposed in~\cite{bernstein2017load} for unbalanced multiphase networks  with both wye-connected  and delta-connected DERs.\hfill $\Box$
\end{remark}}

\subsection{Problem Setup}

The goal is to design a strategy wherein the network operator and end-customers pursue their own operational and economic objectives, while achieving a global coordination to enforce voltage regulation. 

\vspace{.1cm}

\subsubsection{End-customer optimization problem} 
Consider a cost function $C^t_i(p^t_i, q^t_i)$ that captures a well-defined performance objective for the customer(s) located at node $i \in \cN$ at time $t$. Let  $\alpha^t_i\in\mathbb{R}$ and $\beta^t_i\in\mathbb{R}$ be incentive signals produced  by the network operator (e.g., distribution system operator or aggregator) for  active and reactive power injections, respectively, at time $t$. Given signals $(\alpha_i^t, \beta^t_i)$, the following optimization problem is solved at each node $i\in\cN$ at time $t$: 
\begin{subequations}
\begin{eqnarray}
(\bm{\mathcal{P}^t_{1,i}}) &\underset{p^t_i, q^t_i}{\min}&f^t_i(p^t_i, q^t_i|\alpha^t_i, \beta^t_i),\label{eq:obj_c}\\
&\mathrm{s.t.}& (p^t_i, q^t_i) \in \mathcal{Z}^t_i,\label{eq:pccon_c}
\end{eqnarray}
\end{subequations}
where 
\begin{align}
f^t_i(p^t_i, q^t_i|\alpha^t_i, \beta^t_i):=C^t_i(p^t_i, q^t_i)-\alpha^t_i p^t_i-\beta^t_i q^t_i
\end{align}
with $\alpha^t_i p^t_i$ and $\beta^t_i q^t_i$ representing payment to/reward from the network operator. The following standard assumption is made. 

\vspace{.1cm}

\begin{assumption}
\label{as:Cinv}
Functions $C^t_i(p^t_i, q^t_i),\ \forall i\in\hN$ are continuously differentiable and strongly convex in $(p^t_i, q^t_i)$. Moreover, the first-order derivative of $C^t_i(p^t_i, q^t_i)$ is bounded in $\mathcal{Z}_i$.
\end{assumption}

\vspace{.1cm}

The assumption of bounded derivative means that an infinitesimal change in power should not lead to a jump in cost. 
Because (\ref{eq:obj_c}) is strictly convex in  $(p_i^t, q_i^t)$ and  $\mathcal{Z}^t_i$ is convex and compact,  a \emph{unique} solution  $(p_i^{t*}, q_i^{t*})$ exists for each $t$.

For future developments, consider the so-called best response strategy of node $i$, denoted as $b^t_i(\alpha^t_i,\beta^t_i)$, for given $\alpha^t_i$ and $\beta^t_i$:
\begin{eqnarray}
(p^{t*}_i,q^{t*}_i)=b^t_i(\alpha^t_i,\beta^t_i):=\underset{(p^t_i,q^t_i)\in\mathcal{Z}^t_i}{\arg \min}\ \ f^t_i\big(p^t_i,q_i^t | \alpha_i^t,\beta_i^t \big).\label{eq:br}
\end{eqnarray}

\vspace{.1cm}

\subsubsection{Social-welfare problem} 
Consider a cost function $D^t(\hat{v}^t)$ that captures network-oriented objective in voltage at time $t$. 
For example, to minimize the voltage deviation from the nominal value $v^{\text{nom}}$, we can set $D^t(\hat{v}^t)=\frac{1}{2}\|\hat{v}^t-v^{\text{nom}}\|^2$. The following assumption is made. 

\vspace{.1cm}

\begin{assumption}
\label{as:Dinv}
Function $D^t(\hat{v}^t)$ is continuously differentiable,  convex, and with bounded first-order derivative at achievable voltage magnitude values. 
\end{assumption}
Because the set of power injections $(p, q)$ is compact and $\hat{v}$ is a continuous function of $(p, q)$, the achievable $\hat{v}$ values are bounded. Thus, the boundedness of the first-order derivative of $D^t(\hat{v}^t)$ is a reasonable assumption. 

\vspace{.1cm}

Consider the following optimization problem, which captures both customer-oriented and network-oriented objectives of a distribution network: 
\begin{subequations}
\begin{eqnarray}
(\bm{\mathcal{P}^t_2}) &\underset{p^t,q^t,\hat{v}^t, \alpha^t,\beta^t}{\min}& \sum_{i\in\cN} C^t_i(p_i^t,q_i^t)+\gamma^t D^t(\hat{v}^t),\label{eq:obj}\\
&\mathrm{s.t.} & \hat{v}^t= Rp^t + Xq^t + a, \label{eq:volt0}\\
&&\underline{v}^t\leq \hat{v}^t \leq \overline{v}^t,\label{eq:volt}\\
&& (p_i^t,q_i^t)=b_i^t(\alpha_i^t,\beta_i^t),\ \forall i\in\cN,\label{eq:pccon}
\end{eqnarray}
\end{subequations}
where $\gamma^t \in \mathbb{R}_+$ is used to trade off between the end-customer and network-oriented objectives, and $\underline{v}^t$ and $\overline{v}^t$ are vectors collecting prescribed minimum and maximum voltage magnitude limits (the inequalities are component-wise). 

{Note that $(\bm{\mathcal{P}^t_2})$ is usually non-convex due to the constraint (\ref{eq:pccon}). This is because (\ref{eq:pccon}) is usually not affine.  %the feasible set $\mathcal{Z}_i^t$ for the end-customer is usually bounded. 
For better illustration of the non-convexity of $(\bm{\mathcal{P}^t_2})$, consider the following simple example with real power only. Assume a quadratic cost function $C^t_i(p_i^t)={p_i^{t}}^{2}$ and a box feasible set $p^t_i\in[\underline{p}_i,\overline{p}_i]$ with upper and lower bounds for real power injections $\underline{p}_i$ and $\overline{p}_i$, and we end up with a non-convex piece-wise linear function $b^t_i$:
\begin{eqnarray}
p^t_i=b^t_i(\alpha^t_i)=\left\{\begin{array}{ll}\underline{p}_i,   &\text{if}~ \alpha^t_i/2<\underline{p}_i \\ \alpha^t_i/2, & \text{if}~ \underline{p}_i\leq \alpha^t_i/2\leq \overline{p}_i \\ \overline{p}_i,   &\text{if}~  \alpha^t_i/2 > \overline{p}_i
\end{array}\right.,
\end{eqnarray}
which will become more complex if we consider more complicated $C_i^t$ and $\mathcal{Z}_i^t$.}

Problem $(\bm{\mathcal{P}^t_2})$ defines optimal operational \emph{trajectory} $\{p^{t *} ,q^{t *}\}_{t \in \mathbb{R}_+}$ over time for the active and reactive powers of the distribution network. One way to identify (and track) the time-varying optimal points of $(\bm{\mathcal{P}^t_2})$ consists in discretizing the temporal domain as $t_m : = m h,~k \in \mathbb{N}$, where  $h$ is a given time interval, and solve $(\bm{\mathcal{P}^t_2})$ at each time $t_m$. 
Section~\ref{sec:static} will focus on the case where distributed algorithms can be utilized to solve $(\bm{\mathcal{P}^{t_m}_2})$ \emph{to convergence} at each time $t_m$. These algorithms are suitable for operational conditions  where non-controllable demand/generation, ambient conditions, and cost functions are slow time-varying (i.e., the interval $h$ is ``large enough'' to allow convergence of the distributed algorithm).  Section~\ref{sec:realtime} will then focus on faster time-varying operational settings, and will advocate the development of \emph{online} algorithms~\cite{SimonettoGlobalsip2014} that track $\{p^{t *} ,q^{t *}\}_{t \in \mathbb{R}_+}$ over time.

%%%%%%%%%%%%%%%%%
\section{Incentive-Based Distributed Algorithm}\label{sec:static} 
%%%%%%%%%%%%%%%%%	

Focusing on a particular problem instance at time $t$,  $(\bm{\mathcal{P}^t_2})$ lends itself to a Stackelberg game interpretation 
where $\alpha^t$ and $\beta^t$ are calculated via  $(\bm{\mathcal{P}^t_2})$  by the network operator (i.e., the leader) and broadcasted to all nodes $i\in\cN$;  subsequently, each end-consumer (i.e., the follower) computes the power setpoints $p_i^{t*}$ and $q_i^{t*}$ from $(\bm{\mathcal{P}^t_{1,i}})$. By design, $(p^{t*},q^{t*})$ is an optimal point of $(\bm{\mathcal{P}^t_2})$.  

However, it is challenging for the network operator to solve $(\bm{\mathcal{P}^t_2})$ not only because of the non-convexity introduced by constraint (\ref{eq:pccon}), but also because it requires knowledge  of the end-customer's best-response function $b_i^t$. To solve the problem, in Section~\ref{sec:reformulation} we first formulate a convex relaxation of $(\bm{\mathcal{P}^t_2})$ and show that its optimum gives the optimum of $(\bm{\mathcal{P}^t_2})$, and then in Section~\ref{sec:alg}  we design a distributed algorithm to solve  $(\bm{\mathcal{P}^t_2})$ based on the algorithm for the relaxed problem.  Since the same solution procedure is used to solve $(\bm{\mathcal{P}^t_2})$ \emph{to convergence} at each time $t$, the superscript $t$ will be dropped in this section. The  superscript $t$ will be re-introduced in Section~\ref{sec:realtime} where we outline an \emph{online} solution method.

\subsection{Convex Reformulation}
\label{sec:reformulation}
We start by deriving a convex relaxation of the non-convex problem $(\bm{\mathcal{P}_2})$ as well as conditions under which an optimal point of $(\bm{\mathcal{P}_2})$ can be identified. Consider the following convex optimization problem:
\begin{subequations}
\begin{eqnarray}
(\bm{\mathcal{P}_3}) &\underset{p,q,\hat{v}}{\min}& \sum_{i\in\cN} C_i(p_i,q_i)+\gamma D(\hat{v}),\label{eq:obj2}\\
&s.t.& \hat{v}= Rp + Xq + a,\label{eq:volt02}\\
&&\underline{v}\leq \hat{v} \leq \overline{v},\label{eq:volt2}\\
&& (p_i,q_i)\in \mathcal{Z}_i, \hspace{.5cm} \forall \, i \in \cN , \label{eq:pccon2}
\end{eqnarray}
\end{subequations}
where we replace the non-convex constraint (\ref{eq:pccon}) in $(\bm{\mathcal{P}_2})$ with (\ref{eq:pccon2}). We assume that the above problem is feasible. 

\begin{assumption}[Slater's condition]
\label{ass:sla}
There exists a feasible point $(\tilde{p},\tilde{q})\in \mathcal{Z}$, $ \mathcal{Z} :=  \mathcal{Z}_1 \times \ldots \times  \mathcal{Z}_N$, such that: 
\begin{align}
\underline{v} \leq R\tilde{p} + X\tilde{q} + a \leq \overline{v}.
\end{align}
\end{assumption}

Assumption~\ref{ass:sla} does not involve strict inequality because the constraint is linear. Given the strong convexity of the objective function \eqref{eq:obj2} in $(p_i, q_i)$ and the linear relation \eqref{eq:volt02}, a unique optimal solution exists for problem $(\bm{\mathcal{P}_3})$. Notice that a solution $(p_i^*, q_i^*, {\hat{v}}^{*})$ of $(\bm{\mathcal{P}_3})$ may not be feasible for $(\bm{\mathcal{P}_2})$, i.e., there does not exist a $(\alpha^*, \beta^*)$ such that $(p_i^*,q_i^*)=b_i (\alpha_i^*,\beta_i^*)$. We will, however, show next that such a $(\alpha^*, \beta^*)$ exists, and thus the solution of $(\bm{\mathcal{P}_3})$ gives the solution of $(\bm{\mathcal{P}_2})$.

Denote by $\underline{\mu}$ and $\overline{\mu}$ the dual variables  associated with the constraint~\eqref{eq:volt2}. Let $\hat{v}^*$ be the optimal voltage magnitudes produced by $(\bm{\mathcal{P}_3})$ and $\underline{\mu}^*,\overline{\mu}^*$ the optimal dual variables. Then, we propose to design the incentive signals as follows:
\begin{subequations}\label{eq:signal} 
	\begin{eqnarray}
		\alpha^* &=& R\big( \underline{\mu}^*-\overline{\mu}^*  -\gamma\nabla_{\hat{v}} D(\hat{v}^*)\big),\label{eq:alpha}\\
		\beta^* &=& X\big( \underline{\mu}^*-\overline{\mu}^* -\gamma\nabla_{\hat{v}} D(\hat{v}^*)\big) \, ,\label{eq:beta}
	\end{eqnarray}
\end{subequations}
where $\nabla_{\hat{v}} D$ denotes the gradient of function $D$ with respect to the vector $\hat{v}$. { 
Note that $\alpha^*$ and $\beta^*$ are composed of dual prices $\underline{\mu}^*, \overline{\mu}^*$ and the marginal cost of network operator $\gamma\nabla_{\hat{v}} D(\hat{v}^*)$, together with $R,X$ characterizing the network structure. As will be shown shortly, $\alpha^*$ and $\beta^*$ are de facto designed based on the optimality conditions of $(\bm{\mathcal{P}_2})$ and $(\bm{\mathcal{P}_3})$.} The above incentive signals are bounded, which precludes the possibility of infinitely large signals.

\begin{theorem}\label{lem:boundmu}
Under Assumptions \ref{as:Cinv}--\ref{ass:sla}, the incentive signals $(\alpha^*, \beta^*)$ defined by \eqref{eq:signal}  are bounded. \hfill $\Box$ 
\end{theorem}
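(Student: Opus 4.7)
The plan is to decompose the defining expressions~\eqref{eq:signal} into three factors and bound each one separately. Writing $\alpha^*=Rw^*$ and $\beta^*=Xw^*$ with $w^*:=\underline{\mu}^*-\overline{\mu}^*-\gamma\nabla_{\hat v}D(\hat v^*)$, it suffices to bound $\|w^*\|$, since $R$ and $X$ come from the fixed linearized power-flow model and therefore have finite operator norms.

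The gradient term is handled first. Because $(p^*,q^*)\in\mathcal{Z}$ with $\mathcal{Z}$ compact and $\hat v^*=Rp^*+Xq^*+a$, the optimal voltage magnitude $\hat v^*$ lies in the set of achievable voltages. Assumption~\ref{as:Dinv} directly gives $\|\gamma\nabla_{\hat v}D(\hat v^*)\|<\infty$.

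The essential step is bounding the optimal dual variables $\underline{\mu}^*,\overline{\mu}^*\ge 0$ associated with the voltage inequality~\eqref{eq:volt2}. I would use the classical Slater-based argument. Let $(\tilde p,\tilde q)\in\mathcal{Z}$ be a Slater point furnished by Assumption~\ref{ass:sla} and set $\tilde v:=R\tilde p+X\tilde q+a$. Convexity plus linearity of the coupling constraint yields strong duality, so the primal optimum $f^*$ equals the dual value, and therefore
\begin{equation*}
f^* \;=\; g(\underline{\mu}^*,\overline{\mu}^*) \;\le\; L(\tilde p,\tilde q,\underline{\mu}^*,\overline{\mu}^*).
\end{equation*}
Unpacking the Lagrangian and rearranging gives
\begin{equation*}
\underline{\mu}^{*\intercal}(\tilde v-\underline v)+\overline{\mu}^{*\intercal}(\overline v-\tilde v)\;\le\;\sum_{i\in\cN}C_i(\tilde p_i,\tilde q_i)+\gamma D(\tilde v)-f^*,
\end{equation*}
whose right-hand side is finite by Assumption~\ref{as:Cinv} together with compactness of $\mathcal{Z}$ and the bounded-gradient property of $D$ on achievable voltages. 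Selecting $(\tilde p,\tilde q)$ so that every slack $\tilde v_i-\underline v_i$ and $\overline v_i-\tilde v_i$ is strictly positive (possible whenever $\underline v<\overline v$ componentwise, the generic operating regime of a feeder) yields componentwise upper bounds on $\underline{\mu}^*$ and $\overline{\mu}^*$, and hence on $\|w^*\|$.

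The principal obstacle is the availability of a strictly interior Slater point: Assumption~\ref{ass:sla} as stated permits only non-strict feasibility. Exploiting linearity of~\eqref{eq:volt02}, one can average any given Slater point with any other feasible interior configuration to manufacture the strict slack needed above, provided $\underline v<\overline v$ componentwise. Should a voltage limit ever be tight (so no such perturbation exists), the elementary duality inequality must be replaced by a Hoffman-type error bound for the polyhedral constraint~\eqref{eq:volt02}--\eqref{eq:volt2}; this still delivers boundedness because the data $R,X,a,\underline v,\overline v$ and the compact set $\mathcal{Z}$ are all fixed. Either route closes the argument and, combined with the two preceding bounds, proves that $(\alpha^*,\beta^*)$ is bounded.
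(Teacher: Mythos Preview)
Your argument is correct, and it takes a genuinely different route from the paper. The paper writes out the KKT system for $(\bm{\mathcal{P}_3})$, adds the two primal variational inequalities to obtain
\[
\big(\nabla_p f\big)^{\intercal}(p-p^*)+\big(\nabla_q f\big)^{\intercal}(q-q^*)+(\overline{\mu}^*-\underline{\mu}^*)^{\intercal}(\hat v-\hat v^*)\ge 0,\quad\forall (p,q)\in\mathcal{Z},
\]
observes that the first two terms are bounded (bounded gradients, compact $\mathcal{Z}$), and argues by contradiction: if some $\overline{\mu}_i^*$ were arbitrarily large then complementary slackness forces $\hat v_i^*=\overline v_i$, and choosing $(p,q)\in\mathcal{Z}$ with $\hat v_i<\overline v_i$ drives the third term to $-\infty$. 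Your proof instead invokes the classical Uzawa-type bound $f^*\le L(\tilde p,\tilde q,\underline{\mu}^*,\overline{\mu}^*)$ at a Slater point and reads off a direct, quantitative bound on each multiplier from the slack $\tilde v-\underline v,\ \overline v-\tilde v$. Your approach is the standard textbook device and yields an explicit constant; the paper's approach is more tied to the KKT machinery it sets up anyway for the next theorem. Notably, both arguments implicitly require a \emph{strictly} feasible point in each coordinate (the paper needs some $(p,q)\in\mathcal{Z}$ with $\hat v_i<\overline v_i$ to reach the contradiction), so the gap you flag about Assumption~\ref{ass:sla} being stated with non-strict inequalities is present in the paper's proof as well --- you are simply more careful in surfacing it and proposing a remedy.
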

\begin{proof}
Notice that the derivative $\nabla_{\hat{v}} D$ is bounded. To show the boundedness of $(\alpha^*, \beta^*)$, it is enough to show that the optimal duals $(\underline{\mu}^*, \overline{\mu}^*)$ are bounded. 

Consider the KKT conditions for problem $(\bm{\mathcal{P}_3})$:
	\begin{subequations}\label{eq:kktp3}
		\begin{eqnarray}
		&&\Big(\nabla_p\sum_{i\in\hN} C_i(p_i^*,q_i^*)+\gamma R\nabla_{\hat{v}} D(\hat{v}^*)-R(\underline{\mu}^*-\overline{\mu}^*)\Big)^{\intercal}\nonumber\\[-8pt]
		&& \hspace{3cm} (p-p^*)\geq 0, \forall (p,q)\in\mathcal{Z}, \label{eq:kkt1}\\
		&&\Big(\nabla_q\sum_{i\in\hN} C_i(p_i^*,q_i^*)+\gamma X\nabla_{\hat{v}} D(\hat{v}^*)-X(\underline{\mu}^*-\overline{\mu}^*)\Big)^{\intercal}\nonumber\\[-8pt]
		&& \hspace{3cm} (q-q^*)\geq 0,\ \forall (p,q)\in\mathcal{Z},\label{eq:kkt2}\\
		&&\hat{v}^*=Rp^*+Xq^*+a,\label{eq:kkt5}\\
		&&\underline{v}\leq \hat{v}^* \leq \overline{v}, \label{eq:kkt555}\\
		&&(\hat{v}^*-\underline{v})^{\intercal}\underline{\mu}^*=0,\ \underline{\mu}^*\geq 0,\label{eq:kkt55}\\
		&&(\overline{v}-\hat{v}^*)^{\intercal} \overline{\mu}^*=0,\ \overline{\mu}^*\geq 0.\label{eq:kkt6}
		\end{eqnarray}
	\end{subequations}
{ Combining (\ref{eq:kkt1})--(\ref{eq:kkt5}) results in:}
\begin{eqnarray}
&&\hspace{-4mm}\Big(\nabla_p\sum_{i\in\hN} C_i(p_i^*,q_i^*)+\gamma R\nabla_{\hat{v}} D(\hat{v}^*)\Big)^{\intercal}(p-p^*)\nonumber\\[-4pt]
&&\hspace{-1mm}+\Big(\nabla_q\sum_{i\in\hN} C_i(p_i^*,q_i^*)+\gamma X\nabla_{\hat{v}} D(\hat{v}^*)\Big)^{\intercal}(q-q^*)\nonumber\\[-2pt]
&&\hspace{-1mm}+(\overline{\mu}^*-\underline{\mu}^*)^{\intercal}(\hat{v}-\hat{v}^*)\geq 0,\ \forall (p,q)\in\mathcal{Z},\ \forall \hat{v},\label{kk3p3}
\end{eqnarray}
where the first two terms on the left of the inequality are bounded because of the bounded derivative of cost functions and the bounded set $\mathcal{Z}$. By the complementary slackness conditions \eqref{eq:kkt55}-\eqref{eq:kkt6}, $\overline{\mu}^*_i$ and $\underline{\mu}^*_i$,~$i\in \hN$ cannot be nonzero at the same time. If $\overline{\mu}^*_i\rightarrow\infty$, then $\hat{v}^*_i=\overline{v}_i$ and we can choose a $(p, q)$ and thus $\hat{v}_i$ such that the third term on the left of \eqref{kk3p3} goes to $-\infty$ and \eqref{kk3p3} does not hold. So, $\overline{\mu}^*_i$ and thus $\overline{\mu}^*$ is bounded. Similarly, we can show that $\underline{\mu}^*$ is bounded too. The result follows. 
\end{proof}

\vspace{.1cm}

By examining the optimality conditions of $(\bm{\mathcal{P}_2})$ and $(\bm{\mathcal{P}_3})$, we have the following result. 

\vspace{.1cm}
{
\begin{theorem}
\label{the1}
The solutions of problem $(\bm{\mathcal{P}_3})$ along with the signals $(\alpha^*, \beta^*)$ defined in (\ref{eq:signal}) are global optimal solutions of problem $(\bm{\mathcal{P}_2})$; i.e., problem  $(\bm{\mathcal{P}_3})$ is an exact convex relaxation of problem $(\bm{\mathcal{P}_2})$.  \hfill $\Box$
\end{theorem}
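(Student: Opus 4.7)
The plan is to compare the KKT conditions of $(\bm{\mathcal{P}_3})$ with the optimality conditions of the best-response problem $(\bm{\mathcal{P}_{1,i}})$ and then use a standard relaxation-is-tight argument. Concretely, I would first take the optimal primal-dual pair $(p^*, q^*, \hat{v}^*, \underline{\mu}^*, \overline{\mu}^*)$ of $(\bm{\mathcal{P}_3})$ guaranteed to exist by strong convexity of the objective together with Slater's condition (Assumption~\ref{ass:sla}), define $(\alpha^*, \beta^*)$ as in (\ref{eq:signal}), and show that $(p^*, q^*) = b(\alpha^*, \beta^*)$ so that $(p^*, q^*, \hat{v}^*, \alpha^*, \beta^*)$ is feasible (and in fact optimal) for $(\bm{\mathcal{P}_2})$.

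The crux is the first step. Plugging the definitions (\ref{eq:alpha}) and (\ref{eq:beta}) into the variational characterization of the best response
\begin{align*}
&\bigl(\nabla_p C(p^*,q^*)-\alpha^*\bigr)^{\intercal}(p-p^*)\\
&+\bigl(\nabla_q C(p^*,q^*)-\beta^*\bigr)^{\intercal}(q-q^*)\geq 0,\ \forall (p,q)\in\mathcal{Z},
\end{align*}
reproduces exactly the sum of the first-order inequalities (\ref{eq:kkt1}) and (\ref{eq:kkt2}) of $(\bm{\mathcal{P}_3})$. Because $\mathcal{Z}=\mathcal{Z}_1\times\cdots\times\mathcal{Z}_N$ is a product set and $C_i$ depends only on $(p_i,q_i)$, this joint variational inequality decouples across nodes and coincides with the optimality condition of $(\bm{\mathcal{P}_{1,i}})$ at signals $(\alpha_i^*,\beta_i^*)$. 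Uniqueness of the best response (by strong convexity of $C_i$ and compactness of $\mathcal{Z}_i$) then yields $(p_i^*,q_i^*)=b_i(\alpha_i^*,\beta_i^*)$ for every $i$, so constraint (\ref{eq:pccon}) is satisfied. The voltage constraints (\ref{eq:volt0})--(\ref{eq:volt}) are inherited directly from (\ref{eq:kkt5})--(\ref{eq:kkt555}).

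To conclude global optimality, I would argue that $(\bm{\mathcal{P}_3})$ is a relaxation of $(\bm{\mathcal{P}_2})$ in the following sense: for any feasible tuple $(p,q,\hat{v},\alpha,\beta)$ of $(\bm{\mathcal{P}_2})$, the constraint $(p_i,q_i)=b_i(\alpha_i,\beta_i)$ together with the definition (\ref{eq:br}) forces $(p_i,q_i)\in\mathcal{Z}_i$, so $(p,q,\hat{v})$ is feasible for $(\bm{\mathcal{P}_3})$ with the same objective value. Hence the optimal value of $(\bm{\mathcal{P}_3})$ lower-bounds that of $(\bm{\mathcal{P}_2})$. Since the point constructed above is feasible for $(\bm{\mathcal{P}_2})$ and attains the optimal value of $(\bm{\mathcal{P}_3})$, the two optimal values coincide and the point is globally optimal for $(\bm{\mathcal{P}_2})$.

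I do not anticipate a serious technical obstacle: existence/uniqueness of the primal optimum of $(\bm{\mathcal{P}_3})$ is immediate from strong convexity, and existence of the dual multipliers comes from Slater's condition. The most delicate step is verifying that the gradient terms $\gamma R\nabla_{\hat v}D(\hat v^*)$ and $\gamma X\nabla_{\hat v}D(\hat v^*)$ appearing in (\ref{eq:kkt1})--(\ref{eq:kkt2}) precisely cancel the corresponding terms in $(\alpha^*,\beta^*)$ once the relation $\hat{v}^*=Rp^*+Xq^*+a$ is used via the chain rule; this is the reason $R$ and $X$ appear explicitly in the pricing rule (\ref{eq:signal}) and is what makes the customers' decentralized responses aggregate into the socially optimal operating point.
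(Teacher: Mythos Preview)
Your proposal is correct and follows essentially the same approach as the paper: use the KKT conditions of $(\bm{\mathcal{P}_3})$, substitute the signal definitions (\ref{eq:signal}) to recover the variational inequality characterizing each $b_i(\alpha_i^*,\beta_i^*)$, conclude feasibility for $(\bm{\mathcal{P}_2})$, and finish with the standard relaxation-lower-bound argument. Your write-up is in fact more explicit than the paper's on the decoupling across nodes and on why $(\bm{\mathcal{P}_3})$ is a relaxation, but the underlying logic is identical.
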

}
\begin{proof}
By the signal design (\ref{eq:signal}), (\ref{eq:kkt1})--(\ref{eq:kkt2}) become
	\begin{subequations}\label{eq:kktp1}
		\begin{align}
		&\hspace{-2mm}\Big(\nabla_p\sum_{i\in\hN} C_i(p_i^*,q_i^*)-\alpha^*\Big)^{\intercal}(p-p^*)\geq 0,\ \forall (p,q)\in\mathcal{Z},\label{eq:kkt3}\\[-0pt]
		&\hspace{-2mm}\Big(\nabla_q\sum_{i\in\hN} C_i(p_i^*,q_i^*)-\beta^*\Big)^{\intercal}(q-q^*)\geq 0,\ \forall (p,q)\in\mathcal{Z}.\label{eq:kkt4}
		\end{align}
	\end{subequations}
Notice that the above variational inequalities imply that $(p_i^*,q_i^*)=b_i (\alpha_i^*,\beta_i^*),~i\in\hN$. So, the solution of problem $(\bm{\mathcal{P}_3})$ along with $(\alpha^*, \beta^*)$ defined in (\ref{eq:signal}) is feasible for problem $(\bm{\mathcal{P}_2})$. The result follows, as $(\bm{\mathcal{P}_3})$ is a { convex} relaxation of $(\bm{\mathcal{P}_2})$. \end{proof}

\vspace{.1cm}

From now on, we will use the optima of $(\bm{\mathcal{P}_3})$  and $(\bm{\mathcal{P}_2})$ interchangeably depending on the context. Next, based on Theorem~\ref{the1}, we will develop an iterative algorithm that achieves the optimum of $(\bm{\mathcal{P}_3})$ (and hence that of $(\bm{\mathcal{P}_2})$) without exposing any private information of the end-customers to the network operator.

\begin{remark}
Theorem \ref{the1} asserts that non-convex problem $(\bm{\mathcal{P}_2})$ can be solved through solving a convex problem $(\bm{\mathcal{P}_3})$. %and designing signals by (\ref{eq:signal}). 
At first glance, it appears that the non-convexity of $(\bm{\mathcal{P}_2})$ comes from a non-convex representation of the feasible set that may have a convex representation as implied by $(\bm{\mathcal{P}_3})$. An ongoing investigation is to identify the specific problem structure to generalize the result in Theorem \ref{the1} to a larger class  of problems. \hfill $\Box$
\end{remark}

\subsection{Distributed Algorithm}\label{sec:alg}

For notational simplicity, let $s_i= [\alpha_i, \beta_i]^{\intercal}$ denote the overall signals for end-customer $i$ and define $z_i= [p_i, q_i]^{\intercal}$. Further denote by $z:= [p^{\intercal}, q^{\intercal}]^{\intercal}\in\mathbb{R}^{2N}$ the vector of stacked power injections, and by $\mu:=[\underline{\mu}^{\intercal}, \overline{\mu}^{\intercal}]^{\intercal}\in\mathbb{R}_+^{2N}$ the vector of stacked dual variables.
Consider the following  Lagrangian function associated with $(\bm{\mathcal{P}_3})$:
\begin{eqnarray}
\hL(z,\mu)&=& \underset{i\in\cN}{\sum} C_i(z_i)+\gamma D(z)+\underline{\mu}^{\intercal}(\underline{v}-Rp-Xq-a) \nonumber\\[-4pt]
&&\hspace{7mm}+\overline{\mu}^{\intercal}(Rp+Xq+a-\overline{v}),  \label{eq:regLag}
\end{eqnarray}
which is obtained by keeping the constraints $z\in\mathcal{Z}$ and $\mu\in\mathbb{R}_+^{2N}$ implicit. Denote as $(z^*,\mu^*)$ a saddle-point of $\hL(z,\mu)$.  

To facilitate
the development of provably convergent online algorithms (the subject of Section~\ref{sec:realtime}), consider the following regularized Lagrangian function: 
\begin{eqnarray}
 \hL_{\phi}(z,\mu) &:= &  \sum_{i\in\cN} C_i(z_i)+\gamma D(z) +\underline{\mu}^{\intercal}(\underline{v}-Rp-Xq-a)\nonumber\\[-8pt]
&& \hspace{4mm}+\overline{\mu}^{\intercal}(Rp+Xq+a-\overline{v})-\frac{\phi}{2}\|\mu\|^2, \label{eq:regLag}
\end{eqnarray}
where $\phi>0$ is a predefined parameter (see e.g.,~\cite{Koshal11,SimonettoGlobalsip2014}). With the regularization term $-\frac{\phi}{2}\|\mu\|^2$,  the resultant function $\hL_{\phi}(z,\mu)$ is strongly concave in the dual variables. Based on~\eqref{eq:regLag}, {we proceed with the following minimax problem:}
\begin{eqnarray}
&\underset{\mu\in\mathbb{R}_+^{2N}}{\max}\ \underset{z\in\mathcal{Z}}{\min}& \hL_{\phi}(z,\mu) . \label{eq:lyareg}
\end{eqnarray}
In general, the unique optimizer of~\eqref{eq:lyareg}, denoted by $(z_{\phi}^*,\mu_{\phi}^*)$, is not  a saddle-point of the Lagrangian function~\eqref{eq:regLag} because of the regularization term $-\frac{\phi}{2}\|\mu\|^2$. However, the discrepancy between the unique optimizer of~\eqref{eq:lyareg} and the optimizers of~\eqref{eq:regLag} can be bounded as shown next.

Notice first that the boundedness of $\mu^*$ is shown in Theorem~\ref{lem:boundmu}; $\mu_{\phi}^*$ can be readily shown to be bounded too.
For ease of exposition, define $f(z):=\sum_{i\in\cN} C_i(z_i)+\gamma D(z)$ and $g(z):=\begin{bmatrix}\underline{v}-Rp-Xq-a\\ Rp+Xq+a-\overline{v}\end{bmatrix}$; this way,  the Lagrangian can be re-expressed in a compact form as $\hL(z,\mu)=f(z)+\mu^{\intercal}g(z)$ and the regularized counterpart reads $\hL_{\phi}(z,\mu)=f(z)+\mu^{\intercal}g(z)-\frac{\phi}{2}\|\mu\|^2$. From Assumption~\ref{as:Cinv}--\ref{as:Dinv}, it follows that $f$ is strongly convex in $z$. Equivalently, $\nabla_{z} f(z,\mu)$ is strongly monotone in $z$. {Therefore, we have the following lemma:
\begin{lemma}
There exists a scalar $c>0$ such that $\forall z,z'\in\mathcal{Z}$,
\begin{eqnarray}
\big(\nabla_{z} f(z,\mu)-\nabla_{z} f(z',\mu)\big)^{\intercal}(z-z')\geq c\|z-z'\|^2.\label{eq:strongmono}
\end{eqnarray}
	\hfill $\Box$ 
\end{lemma}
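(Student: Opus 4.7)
The plan is to unpack the definition $f(z) = \sum_{i\in\cN} C_i(z_i) + \gamma D(\hat{v})$, where $\hat{v} = Rp + Xq + a$ is an affine function of $z$, and to deduce the claim from the standard equivalence between strong convexity of a differentiable function and strong monotonicity of its gradient. Note that $f$ does not depend on $\mu$, so $\nabla_z f(z,\mu) = \nabla f(z)$ and the asserted inequality is purely a statement about $f$ as a function of $z$.

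First I would argue that each term $C_i$ is strongly convex in $z_i$ by Assumption~\ref{as:Cinv}, with some modulus $c_i > 0$. Summing over $i\in\cN$ yields that $\sum_{i\in\cN} C_i(z_i)$ is strongly convex in the stacked variable $z$ with modulus $c_0 := \min_{i\in\cN} c_i > 0$, since the Hessian (or, more generally, the relevant lower-bound quadratic term) of the sum dominates $c_0 \|z - z'\|^2$ blockwise. Next, I would observe that by Assumption~\ref{as:Dinv} the function $D$ is convex in $\hat{v}$, and composition with the affine map $z \mapsto Rp + Xq + a$ preserves convexity; hence $z \mapsto \gamma D(Rp + Xq + a)$ is convex (with $\gamma \geq 0$). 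Adding a convex function to a strongly convex function preserves strong convexity with the same modulus, so $f$ is strongly convex in $z$ with modulus at least $c_0$.

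Finally, I would invoke the standard characterization that a continuously differentiable strongly convex function $f$ with modulus $c_0$ satisfies
\begin{equation}
\bigl(\nabla f(z) - \nabla f(z')\bigr)^{\intercal}(z - z') \geq c_0 \|z - z'\|^2, \quad \forall\, z, z' \in \mathcal{Z}, \nonumber
\end{equation}
which is exactly the claim with $c = c_0$. The continuous differentiability required here comes from Assumptions~\ref{as:Cinv} and~\ref{as:Dinv}, together with the fact that the affine map $z \mapsto \hat{v}$ is smooth.

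There is no real obstacle here: the result is the standard ``strongly convex $\Rightarrow$ strongly monotone gradient'' fact applied to a composite objective. The only thing to be careful about is that $D$ is not assumed to be strongly convex, only convex, so the strong convexity of $f$ must come entirely from the $C_i$ terms; this is why the constant $c$ ends up depending on $\min_i c_i$ rather than on properties of $D$, $R$, or $X$.
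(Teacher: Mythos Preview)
Your proposal is correct and follows essentially the same approach as the paper: the paper's argument is the sentence immediately preceding the lemma, namely that Assumptions~\ref{as:Cinv}--\ref{as:Dinv} imply $f$ is strongly convex in $z$, which is equivalent to strong monotonicity of $\nabla_z f$. You have simply expanded this one-line observation into its constituent steps (separable strong convexity of $\sum_i C_i$, convexity of $\gamma D$ under affine composition, and the standard strong-convexity/strong-monotonicity equivalence), which is exactly what the paper leaves implicit.
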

}

Then, the discrepancy between  $z^*$ and $z_{\phi}^*$ due to the regularization term can be bounded as follows (see also~\cite[Proposition~3.1]{Koshal11}).
\begin{theorem}\label{thm:btr}
	The difference between $z_{\phi}^*$ and $z^*$ is bounded as:
	\begin{eqnarray}
	\|z_{\phi}^*-z^*\|^2\leq \frac{\phi}{2{c}}\big(\|\mu^*\|^2-\|\mu_{\phi}^*\|^2\big).\label{eq:discrepancy}
	\end{eqnarray}
	\hfill $\Box$ 
\end{theorem}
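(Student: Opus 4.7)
The plan is to exploit the saddle-point characterizations of $(z^*,\mu^*)$ and $(z_\phi^*,\mu_\phi^*)$ and combine them via the strong monotonicity of $\nabla_z f$. Writing $g(z) := [\underline v - Rp - Xq - a;\, Rp + Xq + a - \overline v]$, which is \emph{affine} in $z$, let $G$ denote its constant Jacobian, so that $G(z_\phi^* - z^*) = g(z_\phi^*) - g(z^*)$. Both $\hL$ and $\hL_\phi$ share the same primal stationarity form (the regularizer only acts on $\mu$), giving the two variational inequalities
\begin{equation*}
\bigl(\nabla_z f(z^*) + G^{\intercal}\mu^*\bigr)^{\intercal}(z - z^*)\geq 0,\quad
\bigl(\nabla_z f(z_\phi^*) + G^{\intercal}\mu_\phi^*\bigr)^{\intercal}(z - z_\phi^*)\geq 0,
\end{equation*}
for all $z\in\mathcal Z$, together with the dual inequalities $(\mu-\mu^*)^{\intercal}g(z^*)\leq 0$ and $(\mu-\mu_\phi^*)^{\intercal}\bigl(g(z_\phi^*)-\phi\mu_\phi^*\bigr)\leq 0$ for all $\mu\geq 0$.

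First, I would evaluate the primal inequalities at each other's optimizers ($z=z_\phi^*$ in the first, $z=z^*$ in the second) and add them. After rearranging, the cross-terms in $\mu^*$ and $\mu_\phi^*$ collapse via the affine identity $G(z_\phi^*-z^*)=g(z_\phi^*)-g(z^*)$, yielding
\begin{equation*}
\bigl(\nabla_z f(z_\phi^*)-\nabla_z f(z^*)\bigr)^{\intercal}(z_\phi^*-z^*) \leq (\mu^*-\mu_\phi^*)^{\intercal}\bigl(g(z_\phi^*)-g(z^*)\bigr).
\end{equation*}
The strong monotonicity lemma lower-bounds the left-hand side by $c\,\|z_\phi^*-z^*\|^2$.

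Next, I would upper-bound the right-hand side using the two dual inequalities. Setting $\mu=\mu_\phi^*$ in the first dual inequality gives $(\mu^*-\mu_\phi^*)^{\intercal}g(z^*)\geq 0$, so $-(\mu^*-\mu_\phi^*)^{\intercal}g(z^*)\leq 0$. Setting $\mu=\mu^*$ in the second gives $(\mu^*-\mu_\phi^*)^{\intercal}g(z_\phi^*)\leq \phi\,\mu_\phi^{*\intercal}(\mu^*-\mu_\phi^*)$. Summing the two dual bounds produces
\begin{equation*}
c\,\|z_\phi^*-z^*\|^2 \leq \phi\,\mu_\phi^{*\intercal}(\mu^*-\mu_\phi^*).
\end{equation*}

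Finally, I would apply the elementary identity $2\,\mu_\phi^{*\intercal}(\mu^*-\mu_\phi^*) = \|\mu^*\|^2 - \|\mu_\phi^*\|^2 - \|\mu^*-\mu_\phi^*\|^2 \leq \|\mu^*\|^2 - \|\mu_\phi^*\|^2$ and divide by $c$ to obtain the stated bound. I expect the main subtlety to be the bookkeeping in the step that combines the two primal variational inequalities: one must carefully invoke the affinity of $g$ so that the inner products in $\mu^*$ and $\mu_\phi^*$ regroup into $(\mu^*-\mu_\phi^*)^{\intercal}(g(z_\phi^*)-g(z^*))$. Boundedness of both $\mu^*$ (from Theorem~\ref{lem:boundmu}) and $\mu_\phi^*$ (argued analogously) ensures the right-hand side is finite.
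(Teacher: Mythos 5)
Your proof is correct, and while it rests on the same two pillars as the paper's --- strong monotonicity of $\nabla_z f$ on the primal side and a dual-norm difference supplying the $\tfrac{\phi}{2}\bigl(\|\mu^*\|^2-\|\mu_\phi^*\|^2\bigr)$ term --- the route through them is genuinely different. On the primal side you add the two first-order variational inequalities symmetrically (the standard monotone-operator trick) and use the affinity of $g$ to regroup the cross term as $(\mu^*-\mu_\phi^*)^{\intercal}\bigl(g(z_\phi^*)-g(z^*)\bigr)$; the paper instead works asymmetrically with the single quantity $(\mu_\phi^*-\mu^*)^{\intercal}g(z_\phi^*)$, sandwiching $g_j(z_\phi^*)$ from above and below via convexity of each $g_j$, which is slightly more general (it does not need $g$ affine) but requires keeping track of the signs of $\mu^*_j$ and $\mu^*_{\phi,j}$ separately. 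On the dual side the paper starts from the zeroth-order saddle inequality $\hL_\phi(z_\phi^*,\mu^*)\leq\hL_\phi(z_\phi^*,\mu_\phi^*)$, which produces $\tfrac{\phi}{2}\bigl(\|\mu^*\|^2-\|\mu_\phi^*\|^2\bigr)$ directly, whereas you use the first-order dual variational inequality and recover the norm difference via the polarization identity. A small dividend of your route is the tighter intermediate estimate $c\|z_\phi^*-z^*\|^2\leq\tfrac{\phi}{2}\bigl(\|\mu^*\|^2-\|\mu_\phi^*\|^2-\|\mu^*-\mu_\phi^*\|^2\bigr)$, from which the stated bound follows by discarding the last (nonpositive) term.
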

\begin{proof}
	As a saddle point of (\ref{eq:lyareg}), $(z_{\phi}^*,\mu_{\phi}^*)$ satisfies the following inequalities: 
	\begin{eqnarray}
	\hL_{\phi}(z_{\phi}^*,\mu)\leq \hL_{\phi}(z_{\phi}^*,\mu_{\phi}^*)\leq \hL_{\phi}(z,\mu_{\phi}^*),~\forall z,\mu\label{eq:saddlepoint} \, .\nonumber
	\end{eqnarray}
	 The left inequality leads to
	\begin{eqnarray}
	(\mu_{\phi}^*-\mu^*)^{\intercal}g(z_{\phi}^*)-\frac{\phi}{2}\|\mu_{\phi}^*\|^2+\frac{\phi}{2}\|\mu^*\|^2\geq 0,\label{eq:dis0}
	\end{eqnarray}
	where we set $\mu=\mu^*$. We next characterize the term $(\mu_{\phi}^*-\mu^*)^{\intercal}g(z_{\phi}^*)$.
	
	\emph{(i)} Leveraging the definition of convex functions, $g_j(z_{\phi}^*)$ can be upper bounded as:
	\begin{eqnarray}
	g_j(z_{\phi}^*)&\leq& g_j(z^*)+\nabla_z g_j(z_{\phi}^*)^{\intercal}(z_{\phi}^*-z^*)\nonumber\\
	&\leq&\nabla_z g_j(z_{\phi}^*)^{\intercal}(z_{\phi}^*-z^*),\label{eq:dis1}
	\end{eqnarray}
	where the second inequality is due to the fact that $g_j(z^*)\leq 0$. Multiply both sides of (\ref{eq:dis1}) by $\mu_{\phi,j}^*$ (which is nonnegative) and sum up for all $j$ to have:
	\begin{eqnarray}
	\mu_{\phi}^{*\intercal}g(z_{\phi}^*)&\leq& \sum_j \mu_{\phi,j}^*\cdot\nabla_zg_j(z_{\phi}^*)^{\intercal}(z_{\phi}^*-z^*) \nonumber\\[-4pt]
	&=&\!\!\!\nabla_z\hL_{\phi}(z_{\phi}^*,\mu_{\phi}^*)^{\intercal}(z_{\phi}^*-z^*)\!-\!\nabla_z f(z_{\phi}^*)^{\intercal}(z_{\phi}^*-z^*)\nonumber\\
	&\leq&-\nabla_z f(z_{\phi}^*)^{\intercal}(z_{\phi}^*-z^*),\label{eq:dis2}
	\end{eqnarray}
	where the second inequality is due to the first-order optimality condition $\nabla_z\hL_{\phi}(z_{\phi}^*,\mu_{\phi}^*)^{\intercal}(z_{\phi}^*-z^*)\leq 0$.

	\emph{(ii)} On the other hand, one has that:
 \begin{eqnarray}
	g_j(z_{\phi}^*)\geq g_j(z^*)+\nabla_zg_j(z^*)^{\intercal}(z_{\phi}^*-z^*)\label{eq:dis3}.
	\end{eqnarray}
	Multiply both sides of (\ref{eq:dis3}) by $-\mu_{j}^*$ (which is nonpositive) and sum up for all $j$ to {get}:	
	\begin{eqnarray}
	-\mu^{*\intercal}g(z_{\phi}^*)&\leq& \!\!\!\!\!-\!\sum_j \mu_j^* g_j(z^*)\!-\!\sum_j\mu_j^*\cdot\nabla_zg_j(z^*)^{\intercal}(z_{\phi}^*-z^*)\nonumber\\[-3pt]
	&=&\sum_j\mu_j^*\cdot\nabla_zg_j(z^*)^{\intercal}(z^*-z_{\phi}^*)\nonumber\\[-3pt]
	&=&\!\!\!\!\nabla_z\hL(z^*,\mu^*)^{\intercal}(z^*-z_{\phi}^*)\!-\!\nabla_z f(z^*)^{\intercal}(z^*-z_{\phi}^*)\nonumber\\
	&\leq& \nabla_z f(z^*)^{\intercal}(z_{\phi}^*-z^*),\label{eq:dis4}
	\end{eqnarray}
	where the first equality is due to the complimentary slackness condition and the second inequality is obtained from the first-order optimality condition.
	
	Substitute (\ref{eq:dis2}) and (\ref{eq:dis4}) into (\ref{eq:dis0}), and use (\ref{eq:strongmono}) to obtain (\ref{eq:discrepancy}).
\end{proof}

The key advantage of utilizing the regularized Lagrangian is that the primal-dual gradient methods applied to~\eqref{eq:lyareg} exhibit improved convergence properties~\cite{SimonettoGlobalsip2014} as explained next. 

Hereafter, we omit the subscript $\phi$ from the optimization variables for notational simplicity, with the understanding that the updates of $z(k)$ and $\mu(k)$ are designed to solve the regularized saddle-point problem~\eqref{eq:lyareg}.  Consider the following primal-dual projected gradient method, where $k$ denotes the iteration index: 
\begin{eqnarray}
&&\hspace{-14mm}\begin{bmatrix}z(k+1)\\ \mu(k+1)\end{bmatrix} = \  \hat{T}\left(\begin{bmatrix}z(k)\\ \mu(k)\end{bmatrix}\right)\nonumber\\
&:=&\left[\begin{bmatrix}z(k)\\ \mu(k)\end{bmatrix} -\begin{bmatrix}\varepsilon_1\nabla_{z} \hL_{\phi}(z(k),\mu(k)) \\ -\varepsilon_2\nabla_{\mu}\hL_{\phi}(z(k),\mu(k))
\end{bmatrix} \right]_{\mathcal{Z}\times \mathbb{R}^{2N}_+}\!\!\!\!,\label{eq:mappingT}
\end{eqnarray}
where $[~]_{\mathcal{Z}\times \mathbb{R}^{2N}_+}$ denotes the projection operation onto the set ${\mathcal{Z}\times \mathbb{R}^{2N}_+}$, and $\varepsilon_1, \varepsilon_2 > 0$ are prescribed step sizes for the primal and the dual updates.
Notice that  $\nabla_{z} \hL_{\phi}(z,\mu)$ and $\nabla_{\mu}\hL_{\phi}(z,\mu)$ are  Lipschitz continuous and strongly monotone. Therefore by virtue of \cite[Sec.~3.5, Proposition~5.4]{BeT89}, the following result holds. 

\begin{theorem}\label{the:convergence}
There exist some $\bar{\varepsilon}_1,\bar{\varepsilon}_2>0$ such that for any $\varepsilon_1\in(0,\bar{\varepsilon}_1],\varepsilon_2\in(0,\bar{\varepsilon}_2]$, $\hat{T}$ is a contraction mapping. For $\varepsilon_1\in(0,\bar{\varepsilon}_1],\varepsilon_2\in(0,\bar{\varepsilon}_2]$, the sequence $\{(z(k),\mu(k))\}$ generated by  (\ref{eq:mappingT}) converges geometrically to the optimizer of (\ref{eq:lyareg}).\hfill $\Box$
\end{theorem}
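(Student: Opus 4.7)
The plan is to verify the two hypotheses that underpin the projected primal-dual contraction argument in~\cite[Sec.~3.5, Prop.~5.4]{BeT89}: that the saddle-point vector field
\begin{equation*}
F(z,\mu) := \bigl(\nabla_z \hL_{\phi}(z,\mu),\; -\nabla_\mu \hL_{\phi}(z,\mu)\bigr) = \bigl(\nabla f(z) + G^\intercal \mu,\; -g(z) + \phi\mu\bigr)
\end{equation*}
is strongly monotone and Lipschitz continuous on $\mathcal{Z}\times \mathbb{R}_+^{2N}$, where $g(z)=Gz+h$ is affine with $G,h$ assembled from $R,X$ and $\underline{v},\overline{v}$. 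With this notation, $\hat{T}(x) = [x - \diag(\varepsilon_1 I,\varepsilon_2 I)\,F(x)]_{\mathcal{Z}\times\mathbb{R}_+^{2N}}$.

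First I would verify strong monotonicity. For $x=(z,\mu)$ and $y=(z',\mu')$, expanding $(x-y)^\intercal(F(x)-F(y))$ makes the bilinear contributions $(z-z')^\intercal G^\intercal(\mu-\mu')$ and $-(\mu-\mu')^\intercal G(z-z')$ cancel, leaving
\begin{equation*}
(x-y)^\intercal(F(x)-F(y)) = (\nabla f(z)-\nabla f(z'))^\intercal(z-z') + \phi\|\mu-\mu'\|^2 \geq c\|z-z'\|^2 + \phi\|\mu-\mu'\|^2,
\end{equation*}
by~\eqref{eq:strongmono}; thus $F$ is strongly monotone with modulus $\sigma := \min(c,\phi) > 0$. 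Lipschitz continuity of $F$ with some constant $L$ follows because $\nabla f$ is Lipschitz on the compact set $\mathcal{Z}$ (inherited from continuous differentiability of the $C_i$ and $D$), while the remaining components of $F$ are affine in $(z,\mu)$.

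Next I would combine these two properties with nonexpansiveness of the projection onto $\mathcal{Z}\times \mathbb{R}_+^{2N}$. Working in the weighted Euclidean norm $\|(z,\mu)\|_W^2 := \varepsilon_1^{-1}\|z\|^2 + \varepsilon_2^{-1}\|\mu\|^2$---for which the weighted projection coincides with the Euclidean one because the constraint set is a product and $W$ is block-scalar---$\hat{T}$ acts as a projected unit-step gradient map for $F$. A direct expansion of $\|\hat{T}(x)-\hat{T}(y)\|_W^2$ yields a contraction factor of the form $1 - 2\min(\varepsilon_1,\varepsilon_2)\sigma + O(\max(\varepsilon_1,\varepsilon_2)^2 L^2)$; choosing $\bar{\varepsilon}_1,\bar{\varepsilon}_2$ small enough relative to $\sigma/L^2$ drives this factor strictly below one, so $\hat{T}$ is a contraction. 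Banach's fixed-point theorem then delivers geometric convergence of $\{(z(k),\mu(k))\}$ to the unique fixed point of $\hat{T}$, which coincides with the optimizer $(z_\phi^*,\mu_\phi^*)$ of~\eqref{eq:lyareg} via the stationarity conditions of the regularized saddle-point problem.

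The step I expect to require the most care is absorbing the residual cross term proportional to $(\varepsilon_1-\varepsilon_2)(\mu-\mu')^\intercal G(z-z')$ that reappears in the weighted expansion when the two step sizes differ. It must be dominated by the positive contributions $c\|z-z'\|^2$ and $\phi\|\mu-\mu'\|^2$ via Young's inequality, and it is precisely this bound---requiring both $c > 0$ (strong convexity of $f$) and $\phi > 0$ (Tikhonov regularization $-\tfrac{\phi}{2}\|\mu\|^2$)---that forces $\bar{\varepsilon}_1,\bar{\varepsilon}_2$ to be finite rather than arbitrary, giving the admissible step-size window claimed in the statement.
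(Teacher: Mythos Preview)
Your proposal is correct and matches the paper's approach: the paper does not actually prove the theorem but refers directly to \cite[Sec.~3.5, Prop.~5.4]{BeT89} after observing that $\nabla_z\hL_\phi$ and $\nabla_\mu\hL_\phi$ are Lipschitz continuous and strongly monotone, and you have simply written out the verification of those two hypotheses together with the standard weighted-norm contraction estimate. One small caveat: inferring Lipschitzness of $\nabla f$ from ``continuous differentiability on the compact set $\mathcal{Z}$'' is not quite right (that yields Lipschitz $f$, not Lipschitz $\nabla f$); a $C^2$ or Lipschitz-gradient hypothesis is needed, which the paper also uses tacitly when it writes $\nabla^2_{p_i}(C_i+\gamma D)$ in the Appendix.
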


{
The proof is referred to \cite{BeT89} and omitted here. We can further provide analytical bound for such $\bar{\varepsilon}_1,\bar{\varepsilon}_2$ for completeness. The results put as Theorem~\ref{the:stepsizebound} are presented in the Appendix for better readability. We also refer to Section~\ref{sec:simconv}) for some numerical characterization of step sizes as regards convergence.
}

Given Theorems~\ref{thm:btr}--\ref{the:convergence},  algorithm \eqref{eq:mappingT} converges to within a small neighborhood of problem $(\bm{\mathcal{P}_3})$ (problem $(\bm{\mathcal{P}_2})$) whose size can be controlled by choosing a proper weight $\phi$ for the regularization term. 

The decomposable structure of (\ref{eq:mappingT}) naturally enables the  following iterative \emph{distributed} algorithm:
\begin{subequations}\label{eq:dyn}
		\begin{eqnarray}
		\!\!\!\!\!\!\!\!z_i(k+1)\!\!\!\!&=&\!\!\!\! \big[z_i(k)-\varepsilon_1\big(\nabla_z C_i(z_i(k))-s_i(k)\big)\big]_{\mathcal{Z}_i},\label{eq:dyncus} \\
		\!\!\!\!\!\!\!\!\underline{\mu}(k+1)\!\!\!\!&=&\!\!\!\! \big[\underline{\mu}(k)+\varepsilon_2\big(\underline{v}-\hat{v}(k)-\phi\underline{\mu}(k)\big)\big]_+\,,\label{eq:dynmu}\\
		\!\!\!\!\!\!\!\!\overline{\mu}(k+1)\!\!\!\!&=&\!\!\!\! \big[\overline{\mu}(k)+\varepsilon_2\big(\hat{v}(k)-\overline{v}-\phi\overline{\mu}(k)\big)\big]_+\,,\\
		\!\!\!\!\!\!\!\!\alpha(k+1)\!\!\!\!&=&\!\!\!\!R\big[\underline{\mu}(k+1)\!-\!\overline{\mu}(k+1)\!-\!\gamma\nabla_{\hat{v}}D(\hat{v}(k))\big]\,,\label{eq:dynalpha}\\
			%\hspace{-2mm}&&\hspace{-2mm}  \hspace{0.5cm}+\big(\underline{\mu}(k+1)-\overline{\mu}(k+1)\big)\big]\,,\label{eq:dynalpha}\\
		\!\!\!\!\!\!\!\!\beta(k+1)\!\!\!\!&=&\!\!\!\! X \big[\underline{\mu}(k+1)\!-\!\overline{\mu}(k+1)\!-\!\gamma\nabla_{\hat{v}}D(\hat{v}(k))\big]\,,\label{eq:dynbeta}\\
			%\hspace{-2mm}&&\hspace{-2mm} \hspace{0.5cm}+\big(\underline{\mu}(k+1)-\overline{\mu}(k+1)\big)\big]\,,\label{eq:dynbeta}\\
		\!\!\!\!\!\!\!\!\hat{v}(k+1)\!\!\!\!&=&\!\!\!\! Rp(k+1)+Xq(k+1) + a\,, \label{eq:dynv}
		\end{eqnarray}
\end{subequations}
where the power setpoints of each device are computed locally through~\eqref{eq:dyncus} and~\eqref{eq:dynmu}--\eqref{eq:dynv} are performed at the network operator. The resultant scheme is tabulated as Algorithm~1. Notice that each end-customer $i$ \emph{does not} share its cost function $C_i$ \emph{or} its feasible set $\mathcal{Z}_i$ with the network operator; rather, the end-customer transmits to the network operator only the resultant power injections $z_i(k)$. Indeed, the results of Theorem~\ref{thm:btr}--\ref{the:convergence} apply to (\ref{eq:dyn}) too. 

{
\begin{remark}
In (\ref{eq:dyn}),  $\alpha$ and $\beta$ are utilized by the end-customers to construct  the  primal gradient $\nabla_z\hL_{\phi}$. This strategy enables a distributed implementation of the primal-dual projected gradient method (\ref{eq:mappingT}) without the end-customers knowing information of the network.\hfill $\Box$
\end{remark}
}

%%%%%%%%%%%%%%%%%%%%%%%%%%%%%%%%%%%%%%%
\begin{algorithm}[t]
\label{alg:mechanism_2}
\caption{Incentive-based iterative algorithm} 
\begin{algorithmic}

\REPEAT 

\STATE [S1] End-customer $i \in \cN$ performs~\eqref{eq:dyncus} and sends $z_i(k+1)$ to network operator.

\STATE [S2] Network operator  performs steps~\eqref{eq:dynmu}--\eqref{eq:dynv}.

\STATE [S3] Network operator transmits signals $s_i(k+1)$ to end-customer $i \in \cN$.

\UNTIL stopping criterion is met

\end{algorithmic}
\end{algorithm}

\section{Online Algorithm}\label{sec:realtime}

Algorithm~1 involves an iterative procedure where \eqref{eq:dyn} is repeated until convergence to (within a small neighborhood of) 
%a saddle point of problem~(\ref{eq:lyareg})
an optimum of problem $(\bm{\mathcal{P}_2^{t}})$ 
at each time $t$. 
This requires considering a discrete-time model where time is divided into slots of equal duration, indexed by $t_{m} \in \mathbb{N}_+ = \{0, 1, 2 \cdots \}$, 
where the timeslot duration is expected to be much longer than the convergence time of Algorithm~1.  However, in the case of fast changing operational conditions and cost functions, it is desirable to use a small timeslot duration, i.e., to sample $(\bm{\mathcal{P}_2^{t}})$ at a small sampling interval to track $\{{p^t}^*, {q^t}^* \}_{t \in \mathbb{R}_+}$~\cite{SimonettoGlobalsip2014}. In this case, it may not be possible to solve $(\bm{\mathcal{P}_2^{t_{m}}})$ to convergence within the timeslot, and instead only $K > 0 $ iterations may be performed. Also, problem $(\bm{\mathcal{P}_2^{t_{m}}})$ uses linear approximation \eqref{eq:approximate} of the power-flow model. %, and hence its solution may lead to voltage violation in the case of large approximation error. 
In this section, we will develop an online algorithm that continuously pursues the optima of $(\bm{\mathcal{P}_2^{t_{m}}})_{t_{m} \in \mathbb{N}_+}$, and characterize the ``loss'' of optimality caused by the finite iterations as well as the approximation error.  
%With the aid of appropriate measurements from the distribution systems, the algorithm copes with approximation errors and enables effective voltage regulation in real time.

\subsection{Online Algorithm}

As in Section \ref{sec:alg}, the voltage magnitudes $v_i,~i\in\hN$ will be measured, but they will follow the power-flow equations \eqref{eq:bfm} instead of its linear approximation \eqref{eq:approximate}. We assume that the approximation error is bounded (see also \cite{chiang1990existence, zhou2016vvac}). 

\begin{assumption}%[Bounded approximation error]
\label{ass:error}
There exists a constant $e>0$ such that
$|v_i^{t_m} (z)-\hat{v}_i^{t_m} (z)|\leq e,\ i\in\hN$ for all $z\in\mathcal{Z}$ at any time $t_m$.
%$\forall i\in\hN$ and $\forall m\in \mathbb{R}$. 
\end{assumption}

As the linearized power-flow model \eqref{eq:approximate} is a very accurate approximation under normal operating condition\cite{Baran89,swaroop2015linear,bolognani2015linear}, the bound $e$ expects to be small. 

With the measurement of the voltage magnitudes, the proposed algorithm, formally described as Algorithm \ref{alg:mechanism_3}, executes the following steps at time $t_{m}$ ($k$ denotes the iteration index): 
\begin{subequations}
%{ \small
\vspace{-.2cm}
\label{eq:online}
	\begin{eqnarray}
	{z}^{t_{m}}_i(k+1)  \hspace{-2mm} &=& \hspace{-2mm} \big[{z}^{t_{m}}_i(k)
	-\varepsilon^{t_{m}}_1\big(\nabla_z C_i^{t_{m}}({z}^{t_{m}}_i(k))\nonumber\\
	&& \hspace{-2mm}~~-{s}^{t_{m}}_i(k)\big)\big]_{\mathcal{Z}_i^{t_{m}}}~,\label{eq:dyncus3}\\
%	& & \hspace{-2.4cm} \textrm{Measure~} {{v}^t(k+1)} \nonumber \\
	{\underline{\mu}}^{t_{m}}(k+1) \hspace{-2mm} &=& \hspace{-2mm} \big[{\underline{\mu}}^{t_{m}}(k)+\varepsilon^{t_{m}}_2\big(\underline{v}^{t_{m}}-{v}^{t_{m}}(k)\nonumber\\
		&& \hspace{-2mm}~~ -\phi{\underline{\mu}}^{t_{m}}(k)\big)\big]_+~,\label{eq:dynmu3}\\
	{\overline{\mu}}^{t_{m}}(k+1) \hspace{-2mm}&=& \hspace{-2mm}\big[{\overline{\mu}}^{t_{m}}(k)+\varepsilon^{t_{m}}_2\big({v}^{t_{m}}(k)-\overline{v}^{t_{m}}\nonumber\\
			&& \hspace{-2mm}~~ -\phi{\overline{\mu}}^{t_{m}}(k)\big)\big]_+~,\\
	{\alpha}^{t_{m}}(k+1) \hspace{-2mm}&=& \hspace{-2mm}R\big[{\underline{\mu}}^{t_{m}}(k+1)-{\overline{\mu}}^{t_{m}}(k+1)\nonumber\\
	 \hspace{-2mm}&& \hspace{-2mm}~~-\gamma^{t_{m}}\nabla_vD^{t_{m}}({v}^{t_{m}}(k))\big]\,~,\\
	{\beta}^{t_{m}}(k+1) \hspace{-2mm}&=& \hspace{-2mm}X \big[{\underline{\mu}}^{t_{m}}(k+1)-{\overline{\mu}}^{t_{m}}(k+1)\nonumber\\
	 \hspace{-2mm}&& \hspace{-2mm}~~-\gamma^{t_{m}}\nabla_vD^{t_{m}}({v}^{t_{m}}(k))\big]\,~,\label{eq:dynbeta3}\\
	 {v}^{t_{m}}(k+1) \hspace{-2mm}&&\hspace{-2mm} \text{obey the nonlinear model (\ref{eq:bfm})}.
	\end{eqnarray}
%	}
\end{subequations}
Iterations~\eqref{eq:online} are performed $K > 0$ times during each timeslot $t_{m}$. When $K=1$, only one iteration~\eqref{eq:online} is computed per timeslot~\cite{SimonettoGlobalsip2014,OPFpursuit,LowOnlineOPF,AndreayOnlineOpt,Bernstein15}. { It is worth noticing that a centralized implementation of~\eqref{eq:online} requires collecting the time-varying $C_i^t$ and $\mathcal{Z}_i^t$ at the network operator at each iteration; hence, a a centralized implementation would incur a higher communication overhead.}

\begin{algorithm}
\caption{Real-time incentive-based algorithm } \label{alg:mechanism_3}
\begin{algorithmic}
\STATE At each timeslot $t_{m}$:
\STATE ~\,\,\,\,[S0] Initialization: $z^{t_{m}}(0)=z^{t_{m} -1}(K)$, \\ 
\hspace{1.1cm} $\mu^{t_{m}}(0)=\mu^{t_{m}-1}(K)$,  \\
\hspace{1.1cm} $s^{t_{m}}(0)=s^{t_m-1}(K)$.

\REPEAT 

\STATE [S1] End-customer $i \in \cN$ performs~\eqref{eq:dyncus3}.  

\STATE [S2] End-customer $i \in \cN$ {implements} $z^{t_{m}}_i(k+1)$. 

\STATE [S3] Network operator performs steps~\eqref{eq:dynmu3}--\eqref{eq:dynbeta3}.

\STATE [S4] Network operator transmits signals $s^{t_{m}}_i(k+1)$ to end-customer $i \in \cN$.

\STATE [S5] Network operator measures voltages $v^{t_{m}}$.

\UNTIL $k= K$

\end{algorithmic}
\end{algorithm}

In the next subsection, we will analyze the convergence and tracking capability  of the above online algorithm. 

{
\begin{remark} (local controller)
The proposed algorithms  produce setpoints $(p_i^{t}, q_i^{t}) \in \cZ_i^{t}$ for the output powers of the DERs. It is assumed that the DERs are endowed with local controllers that are designed so that, upon receiving the setpoint, the output powers are driven to the commanded setpoints. Relevant dynamical models for the output powers of inverters operating in a grid-connected mode are discussed in e.g.,~\cite{Iravanibook10,Irminger12} and can be found in  datasheets of commercially available DERs. Assumption~\ref{ass:error} accounts for both  measurement errors and bounds the discrepancy between the commanded setpoint and the actual output powers; Assumption~\ref{ass:error} is valid, for example, when the DER's response to a step-change in the setpoint follows a first-order model~\cite{Iravanibook10,Irminger12}. \hfill $\Box$
\end{remark}}

{
\begin{remark} (implementation)
The proposed real-time algorithms update the setpoints of the DERs on a second or subsecond timescale to  maximize the operational objectives while coping with the variability of available renewable-based generation and non-controllable energy assets. The algorithm does not control any anti-islanding and ride-through parameters. Considerations regarding the recloser-fuse coordination problem (which affects anti-islanding and ride-through configurations) pertain to  the deployment of the DERs and given interconnection agreements. In case of event where the DERs are required to shut off, the algorithm will not produce any setpoint; the algorithm will re-start producing setpoints once the DERs are allowed to reconnect to the system.  \hfill $\Box$
\end{remark}}

\subsection{Performance Analysis}

In this subsection, the hatted symbols (e.g.,  $\hat{\alpha}^t(k)$, $\hat{\beta}^t(k)$) refer to the iterates produced by the algorithm~\eqref{eq:dyn} (i.e., under the linear approximation \eqref{eq:approximate}), while the non-hatted symbols (e.g., $\alpha^t(k)$, $\beta^t(k)$) refer to those produced by~\eqref{eq:online} (i.e., under the nonlinear model (\ref{eq:bfm})).

By comparing~\eqref{eq:dyn} and~\eqref{eq:online}, Assumption~\ref{ass:error} leads to the following bounds: 
\begin{eqnarray}
|\hat{\underline{\mu}}^{t_m}_i-\underline{\mu}^{t_m}_i|\leq \varepsilon_2 e,\ \ \ |\hat{\overline{\mu}}_i^{t_m}-\overline{\mu}_i^{t_m}|\leq \varepsilon_2 e,\nonumber\\
|\hat{\alpha}_i^{t_m}-\alpha_i^{t_m}|\leq R_i^{\intercal}(\gamma\nabla_v^2D(\tilde{v}^{t_m})\bm{1}_n+\varepsilon_2)e,\nonumber\\
|\hat{\beta}_i^{t_m}-\beta_i^{t_m}|\leq X_i^{\intercal}(\gamma\nabla_v^2D(\tilde{v}^{t_m})\bm{1}_n+\varepsilon_2)e,\nonumber
\end{eqnarray}
for some $\tilde{v}^{t_m}$ and, therefore:
\begin{eqnarray}
|\hat{p}^{t_m}_i-p^{t_m}_i|\leq \varepsilon_1 R_i^{\intercal}(\gamma\nabla_v^2D(\tilde{v}^{t_m})\bm{1}_n+\varepsilon_2)e:=\delta_{1,i},\nonumber\\
|\hat{q}^{t_m}_i-q^{t_m}_i|\leq \varepsilon_1 X_i^{\intercal}(\gamma\nabla_v^2D(\tilde{v}^{t_m})\bm{1}_n+\varepsilon_2)e:=\delta_{2,i}.\nonumber
\end{eqnarray}

Let $\delta:=[\delta_{1,1},\ldots,\delta_{1,N},\delta_{2,1},\ldots,\delta_{2,N}]\in\mathbb{R}_+^{2N}$, and collect the primal and dual variables {in} the vector $y:=(z,\mu)$ for notational simplicity. Consequently, {the following holds}: 
\begin{eqnarray}
\|\hat{T}^{t_m}(y)-T^{t_m}(y)\|\leq \|\rho\|,\ \forall y\in\mathcal{Z}^{t_m}\times \mathbb{R}^{2N}_+,\label{eq:bound2model}
\end{eqnarray}
where $\rho:= [\varepsilon_2 e\cdot \mathbf{1}_{1\times 2N},\delta^{\intercal}]^{\intercal}$ and $T^{t_m}(\cdot)$ is the counterpart of $\hat{T}^{t_m}(\cdot)$ for the iterates~\eqref{eq:online} at time $t_m$. Let $\Delta\leq\bar{\Delta}<1$ be the contraction modulus for $\hat{T}^{t_m}(\cdot)$; with appropriate step sizes $\varepsilon^{t_m}_1$ and $\varepsilon^{t_m}_2$ chosen according to Theorem \ref{the:convergence}, by definition, we have that: 
\begin{eqnarray}
\|\hat{T}^{t_m}(y)-\hat{T}^{t_m}(y')\|\leq\Delta\|y-y'\|,\ \forall y,y'\in\mathcal{Z}^{t_m}\times \mathbb{R}^{2N}_+. \label{def:contraction}
\end{eqnarray}

Recall that $\hat{y}^{t_m *}$ denotes an optimizer of $\hL_{\phi}^{t_m}$. Since $\hL_{\phi}^{t_m}$ is a time-varying problem, consider  capturing the  variation of an optimizer over two consecutive time instants as:
\begin{eqnarray}
	\|\hat{y}^{t_{m+1} *}- \hat{y}^{t_m *}\|\leq \sigma, \label{eq:sigma}
\end{eqnarray}
where $0 < \sigma < + \infty$~\cite{SimonettoGlobalsip2014}.

Then, the following result  characterizes the discrepancy between the powers produced by (\ref{eq:online}) and an optimizer of $\hL_{\phi}^{t_m}$.

\begin{theorem}\label{the:online}
	Under Assumptions \ref{as:Cinv}--\ref{ass:error} and step sizes chosen according to Theorem \ref{the:convergence}, the sequence $\{{y}^{t_m}\}$ generated by Algorithm \ref{alg:mechanism_3} converges as
	\vspace{-.2cm}
	%Under current modeling assumptions, it holds that
		\begin{eqnarray}
	\lim_{m\rightarrow\infty}\sup\|{y}^{t_m}(K)-\hat{y}^{t_m *}\|=\frac{\|\rho\|}{1-\Delta}+\frac{\sigma\Delta^K}{1-\Delta^K}~~ .\label{eq:onlinerad}
	\end{eqnarray}
  \hfill $\Box$
\end{theorem}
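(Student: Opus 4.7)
The plan is to bound the tracking error $e_m := \|y^{t_m}(K) - \hat{y}^{t_m *}\|$ by deriving a scalar linear recursion in $m$ that combines three ingredients already established earlier: the contraction of $\hat{T}^{t_m}$ with modulus $\Delta$ in (\ref{def:contraction}), the model-mismatch bound (\ref{eq:bound2model}), and the optimizer drift bound (\ref{eq:sigma}). Iterating this recursion and passing to the $\limsup$ should deliver the two-term expression in (\ref{eq:onlinerad}), with one term attributable to each source of error.

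First, for a single inner iteration $k \to k+1$ within timeslot $t_m$, I would use the fact that $\hat{y}^{t_m*}$ is a fixed point of $\hat{T}^{t_m}$, insert $\hat{T}^{t_m}(y^{t_m}(k))$, and apply the triangle inequality, (\ref{eq:bound2model}), and (\ref{def:contraction}) to obtain
\begin{align*}
\|y^{t_m}(k{+}1) - \hat{y}^{t_m*}\|
&\leq \|T^{t_m}(y^{t_m}(k)) - \hat{T}^{t_m}(y^{t_m}(k))\| \\
&\quad + \|\hat{T}^{t_m}(y^{t_m}(k)) - \hat{T}^{t_m}(\hat{y}^{t_m*})\| \\
&\leq \|\rho\| + \Delta \|y^{t_m}(k) - \hat{y}^{t_m*}\|.
\end{align*}
Unrolling this geometric recursion $K$ times inside the timeslot and summing the resulting finite geometric series yields
\begin{equation*}
\|y^{t_m}(K) - \hat{y}^{t_m*}\| \leq \Delta^K \|y^{t_m}(0) - \hat{y}^{t_m*}\| + \|\rho\|\,\tfrac{1-\Delta^K}{1-\Delta}.
\end{equation*}

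Next, I would pass from one timeslot to the next through the warm-start rule $y^{t_m}(0) = y^{t_{m-1}}(K)$. A triangle inequality combined with (\ref{eq:sigma}) gives $\|y^{t_m}(0) - \hat{y}^{t_m*}\| \leq e_{m-1} + \sigma$, so the two inequalities above combine into the affine recursion
\begin{equation*}
e_m \leq \Delta^K\bigl(e_{m-1} + \sigma\bigr) + \|\rho\|\,\tfrac{1-\Delta^K}{1-\Delta}.
\end{equation*}
Since $\Delta < 1$ implies $\Delta^K < 1$, this is a stable scalar contraction in $e_m$; taking $\limsup_{m\to\infty}$ of both sides and solving the resulting fixed-point equation produces
\begin{equation*}
\limsup_{m\to\infty} e_m = \tfrac{\sigma \Delta^K}{1-\Delta^K} + \tfrac{\|\rho\|}{1-\Delta},
\end{equation*}
which is exactly (\ref{eq:onlinerad}).

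The main obstacle I anticipate is bookkeeping rather than conceptual: one must be careful that the contraction modulus $\Delta$ and the mismatch bound $\|\rho\|$ can indeed be taken uniformly in $m$, so that the scalar recursion has a single geometric rate and a single additive disturbance. Uniformity of $\Delta$ follows from the assumption that $\Delta \leq \bar\Delta < 1$ is available at each $t_m$ under the step-size choice of Theorem~\ref{the:convergence}, and uniformity of $\|\rho\|$ follows from Assumption~\ref{ass:error} applied pointwise across $\cZ^{t_m}$. A secondary subtlety is that the feasible set $\mathcal{Z}^{t_m}$ itself varies with $m$, but since both $y^{t_m}(k)$ and $\hat{y}^{t_m*}$ lie in $\mathcal{Z}^{t_m}\times\mathbb{R}_+^{2N}$, the projection operator in the definition of $T^{t_m}$ does not interfere with the contraction argument inside a given timeslot.
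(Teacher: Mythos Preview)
Your proposal is correct and follows essentially the same approach as the paper: both derive the one-step bound $\|\rho\| + \Delta\|y^{t_m}(k)-\hat{y}^{t_m*}\|$ by inserting $\hat{T}^{t_m}(y^{t_m}(k))$ and invoking (\ref{eq:bound2model}) and (\ref{def:contraction}), unroll it $K$ times to obtain $\Delta^K\|y^{t_m}(0)-\hat{y}^{t_m*}\| + \|\rho\|\tfrac{1-\Delta^K}{1-\Delta}$, and then use the warm start together with (\ref{eq:sigma}) to close the recursion across timeslots. The only cosmetic difference is that the paper iterates the outer recursion explicitly over $t_m,\ldots,t_0$ before letting $m\to\infty$, whereas you take the $\limsup$ directly on the affine recursion; both lead to the same fixed-point expression in (\ref{eq:onlinerad}).
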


\begin{proof}
We can characterize the distance between the operating point achieved by (\ref{eq:online}) in $K$ iterations and the optimizer of $\hL^{t_m}_{\phi}$ as follows:
	\begin{eqnarray}
	&&\|{y}^{t_m}(K)-\hat{y}^{t_m*}\|\nonumber\\
	&=&\|{T}^{t_m}({y}^{t_m}(K-1))-\hat{T}^{t_m}({y}^{t_m}(K-1))\nonumber\\
	&&~~+\hat{T}^{t_m}({y}^{t_m}(K-1))-\hat{y}^{t_m *}\|\label{eq:steptm3}\\
	&\leq&\|{T}^{t_m}({y}^t(K-1))-\hat{T}^{t_m}({y}^{t_m}(K-1))\|\nonumber\\
	&&~~+\|\hat{T}^{t_m}({y}^{t_m}(K-1))-\hat{y}^{t_m *}\|\label{eq:steptm3_1}\\
	&\leq& \|\rho\|+\Delta\|{y}^{t_m}(K-1)-\hat{y}^{t_m *}\|\label{eq:steptm3_2}\\
	&\leq& \frac{\|\rho\|(1-\Delta^K)}{1-\Delta}+\Delta^K\|{y}^{t_m}(0)-\hat{y}^{t_m *}\|\label{eq:steptm3_5}\\
	&=&\frac{\|\rho\|(1-\Delta^K)}{1-\Delta} +\Delta^K\|{y}^{t_{m-1}}(K)-\hat{y}^{t_{m-1}*}\nonumber \\
	&&~~+\hat{y}^{t_{m-1}*}-\hat{y}^{t_m*}\| \\
	&\leq&\frac{\|\rho\|(1-\Delta^K)}{1-\Delta}+\Delta^K\|{y}^{t_{m-1}}(K)-\hat{y}^{t_{m-1} *}\|\nonumber\\
	&&~~+\Delta^K\|{y}^{t_{m-1} *}-\hat{y}^{t_m *}\| \\
	&\leq& \frac{\|\rho\|(1-\Delta^K)}{1-\Delta}+\Delta^K\|{y}^{t_{m-1}}(K)-{y}^{t_{m-1}*}\| \nonumber \\
	&&~~ +\Delta^K\sigma,\label{eq:steptm4}
	\end{eqnarray}
	where: (\ref{eq:steptm3_2}) follows from (\ref{eq:bound2model}) and (\ref{def:contraction}); (\ref{eq:steptm3_5}) can be obtained by  repeating (\ref{eq:steptm3})--(\ref{eq:steptm3_2}) for $K$ times; and, (\ref{eq:steptm4}) follows from (\ref{eq:sigma}). {We repeat steps (\ref{eq:steptm3})--(\ref{eq:steptm4}) recursively over time instants $t_m,\ldots, t_0$ to obtain:}
	\begin{align}
	& \|{y}^{t_m}(K)-\hat{y}^{t_m *}\|\nonumber\\
	&\leq (\frac{\|\rho\|(1-\Delta^K)}{1-\Delta}+\Delta^K\sigma)\frac{1-\Delta^{K m}}{1-\Delta^K}+\Delta^{K m}\|{y}^{t_0}-\hat{y}^{t_0*}\|.\nonumber
	\end{align}
	When $m \rightarrow\infty$, (\ref{eq:onlinerad}) follows.
\end{proof}

The result~\eqref{eq:onlinerad} bounds the maximum discrepancy between the setpoints ${y}^{t_m}(K)$ generated by Algorithm~\ref{alg:mechanism_3} and a  time-varying optimizer of $(\bm{\mathcal{P}^{t}_2})$.  The bound~\eqref{eq:onlinerad}  depends on: 

\noindent i)  The underlying dynamics of the distribution system; in fact, we recall that the dynamics of non-controllable power assets, constraints, and operational conditions translate into temporal variations of the optimizers of $(\bm{\mathcal{P}^{t}_2})$~\cite{SimonettoGlobalsip2014}, which is characterized by  the parameter $\sigma$. When the variation of $\hat{y}^{t_m *}$ is smooth in time, bound~\eqref{eq:onlinerad} becomes tighter. 

\noindent ii) The approximation error introduced by the linearized power-flow equation, which is implicitly captured by $\rho$.

The result~\eqref{eq:onlinerad} can also be interpreted as an input-to-state stability, when one adopts the trajectory $\{\hat{y}^{t_m *}\}$ as a reference frame. Future research efforts will aim at characterizing the discrepancy between ${y}^{t_m}(K)$ and the optimal point of $(\bm{\mathcal{P}^{t}_2})$ when the nonlinear AC power-flow equations are utilized. 

Finally, when $K=1$,~\eqref{eq:onlinerad} boils down to:
\begin{align}
	\lim_{m\rightarrow\infty}\sup\|{y}^{t_m}(1)-\hat{y}^{t_m *}\|=\frac{\|\rho\| + \sigma\Delta}{1-\Delta} \, .\label{eq:onlinerad1}
\end{align}
The bound in~\eqref{eq:onlinerad1} is close in spirit to~\cite{OPFpursuit}, and it can be further simplified as $\frac{\|\rho\|}{1-\Delta}$ for the case of time-invariant (i.e. static) settings~\cite{SimonettoGlobalsip2014}.

%%%%%%%%%%%%%%%%%
\section{Numeric Examples}
\label{sec:scenarios}
\subsection{Simulation Setup}
%{\color{red} Xinyang: I need to re-run the simulation with regularization.}

Consider a modified version of the IEEE 37-node test feeder shown in Figure~\ref{F_feeder}. The modified network is obtained by considering the phase ``c'' of the original system  and by replacing the loads specified in the original dataset with real load data measured from feeders in Anatolia, California, during a week of August 2012~\cite{Bank13}. Particularly, the data have a granularity of 1 second, and represent the loading of secondary transformers.
Line impedances, shunt admittances, as well as active and reactive loads are adopted from the respective data set. It is assumed that 18 PV systems are located at nodes $4$, $7$, $10$, $13$, $17$, $20$, $22$, $23$, $26$, $28$, $29$, $30$, $31$, $32$, $33$, $34$, $35$, and $36$, and their generation profiles are simulated based on the real solar irradiance data available in~\cite{Bank13}. %Solar irradiance data have a granularity of 1 second. 
{The ratings of these inverters are $300$ kVA for $i = 3$, $350$ kVA for $i = 15, 16$, and $200$ kVA for the remaining inverters. Loads and the power available from a PV system with capacity of $50$~kW are reported in Fig.~\ref{F_VPP_paper_load} for illustrative purposes. }

\begin{figure}[t]
\centering
\vspace{.25cm}
\includegraphics[width=.45\textwidth]{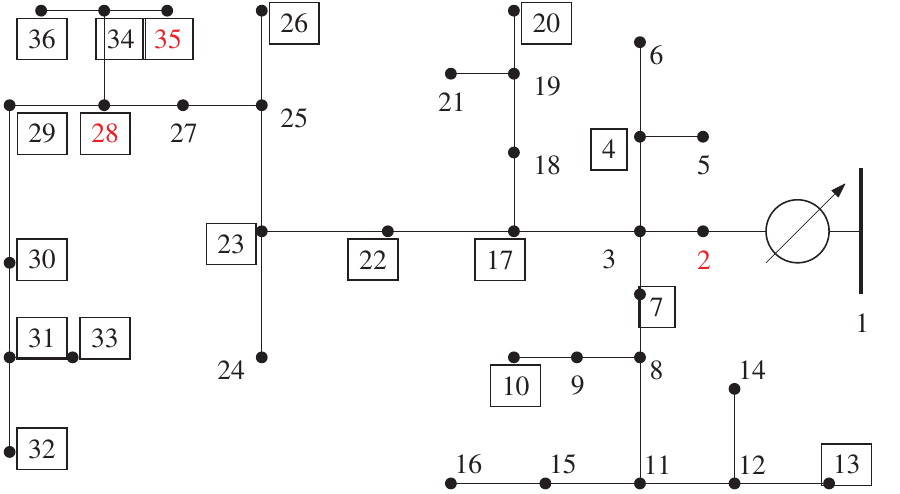}
\caption{IEEE 37-node feeder. The boxes represent PV systems. The red nodes are the ones analyzed in the numerical example.}
\label{F_feeder}
\end{figure}

%%%%%%%%%%%%%%%%%%%%%%%%%%%%%%%%%%%%
\begin{figure}[t!] 
\begin{center}
\hspace{-.0cm}\includegraphics[width=9.6cm]{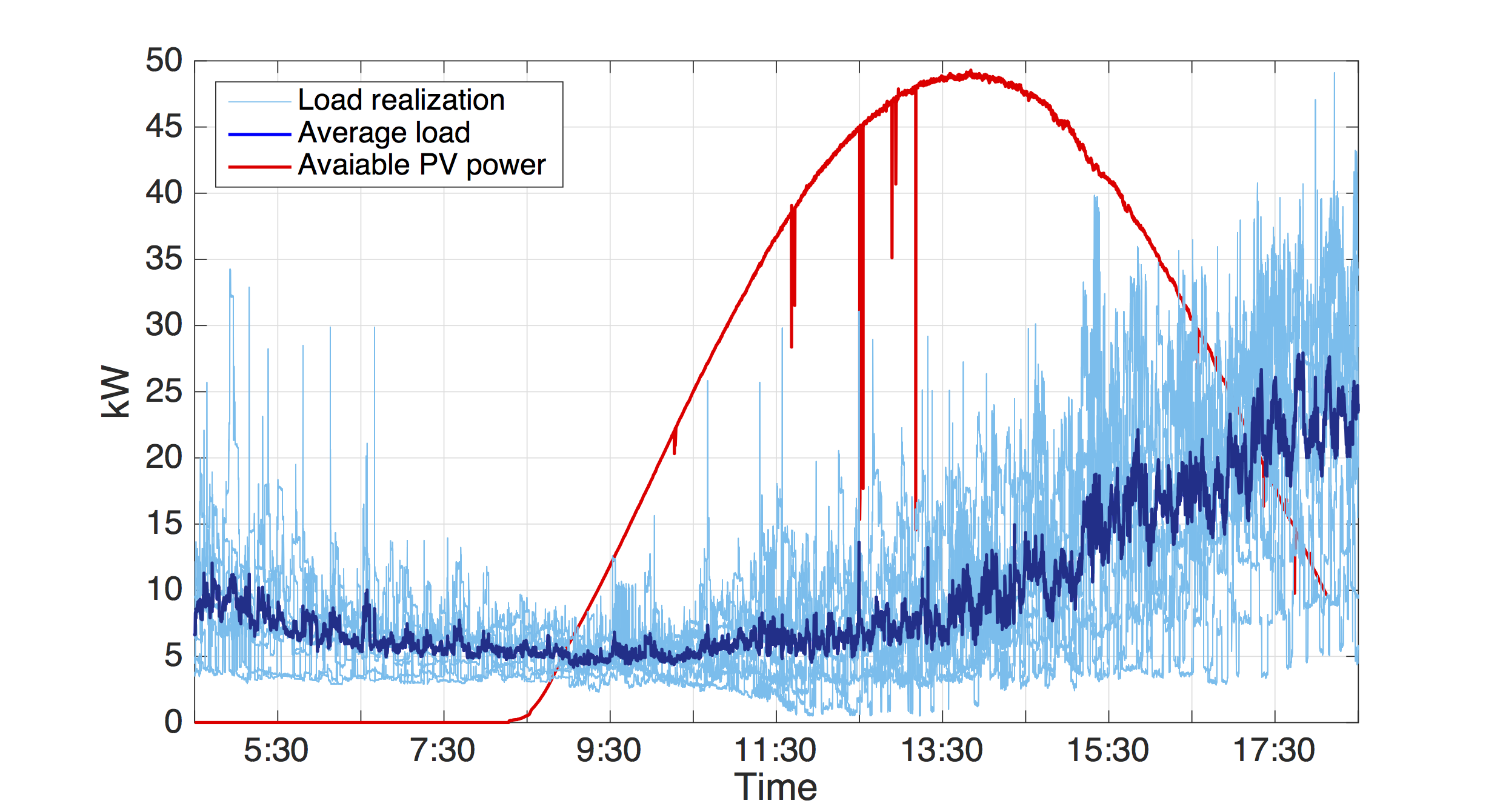}
%Use {F_paper_load-eps-converted-to} for the original one. But the difference is not distinguishable.
\end{center}
\vspace{-.4cm}
\caption{Profiles of loads and power available from the PV systems. The average load profile is marked in blue.}
\vspace{-.2cm}
\label{F_VPP_paper_load}
\end{figure}
%%%%%%%%%%%%%%%%%%%%%%%%%%%%%%%%%%%%

The voltage limits $\overline{v}_i$ and $\underline{v}_i$ are set to $1.05$ p.u. and $0.95$ p.u. respectively, for $\forall i\in\hN$.
Various step sizes $\varepsilon_1$ and $\varepsilon_2$ are tested to provide examples of cases where the algorithm converges as well as cases where it is not convergent. The customers' objective functions are set uniformly to $C_i^t(p_i^t,q_i^t) =   c_p (p^t_{i,\textrm{av}} - p^t_i)^2+c_q q_i^{t2} $, in an effort to minimize the amount of real power curtailed from the available power $p^t_{i,\textrm{pv}}$ based on irradiance conditions at time $t$, and the amount of reactive power injected or absorbed. The coefficients are set to $c_p = 3$ and $c_q = 1$. 
The network-oriented objective is set to $D(v^t)=\frac{1}{2}\|v^t-v^{\text{nom}}\|_2^2$ to penalize voltage deviation from the nominal value $v^{\text{nom}} = 1 $ p.u.
Without loss of generality, we demonstrate our results with the trade-off parameter $\gamma$ set to either $0$ or $1$. For $\gamma = 1$, it is possible to trade off  the customer-oriented objectives for flatness of the voltage profile. The regularization parameter $\phi$ is set to $10^{-4}$.
\begin{figure}
\centering
\includegraphics[trim = 0mm 0mm 0mm 0mm, clip, scale=0.44]{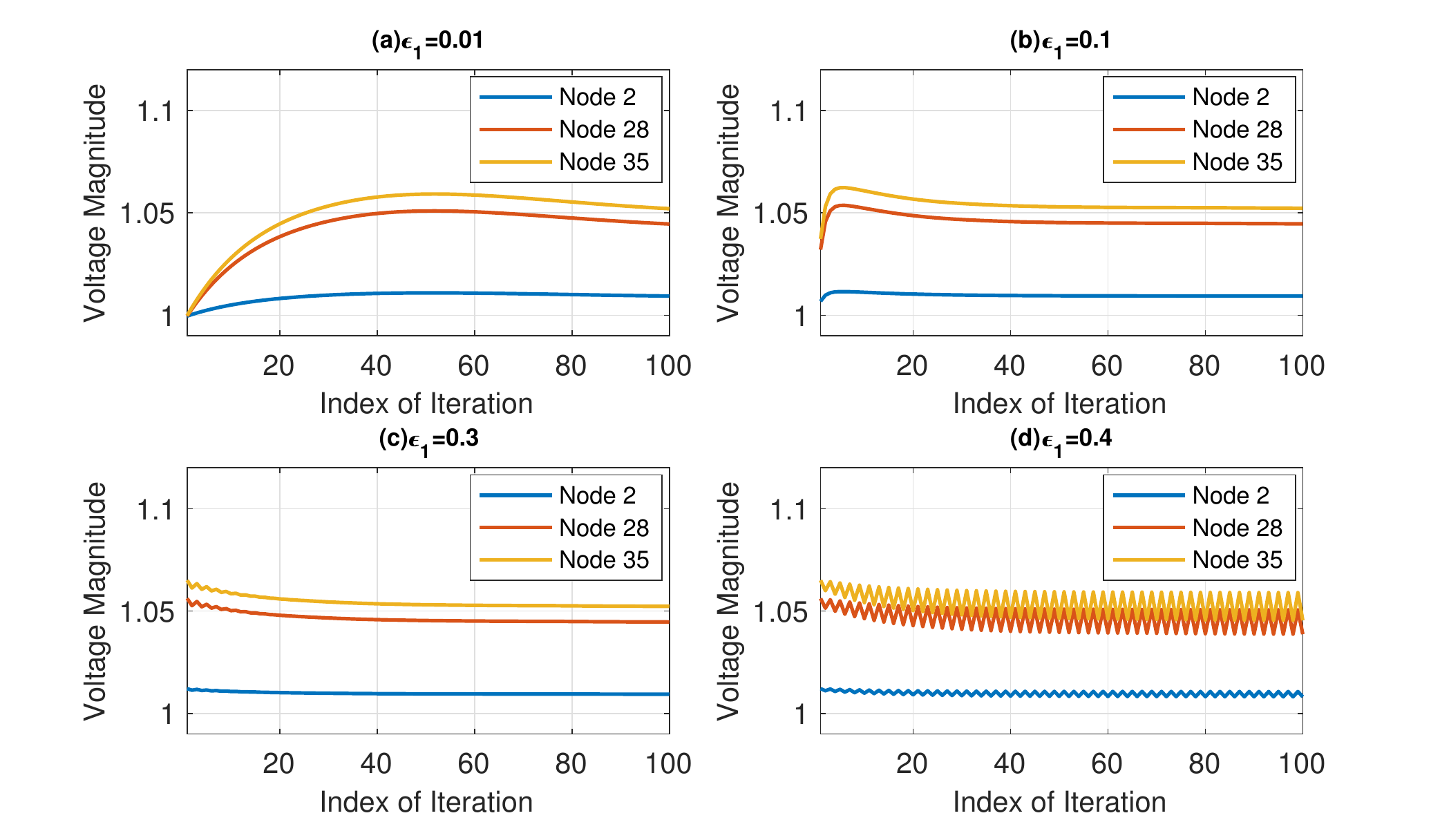} 
\caption{Convergence of the distributed algorithm with increasing step size $\varepsilon_1$ and fixed step size $\varepsilon_2$.}
\label{F_convergence}
\vspace{-.2cm}
\end{figure}

\subsection{Iterative Algorithm}
We first test Algorithm~1 and show how the algorithm can address overvoltages in distribution systems~\cite{Liu08}. To this end, we focus on a single timeslot at 12~pm. 

\subsubsection{Convergence}\label{sec:simconv}
Let  $\gamma=0$ for simplicity. Recall from Theorem~\ref{the:convergence} that step sizes $\varepsilon_1$ and $\varepsilon_2$ both affect the convergence properties. For simplicity, set $\varepsilon_2=0.01$, and consider tuning $\varepsilon_1$ to achieve convergence. Similar results can be observed by fixing $\varepsilon_1$ and tuning $\varepsilon_2$, or tuning both $\varepsilon_1$ and $\varepsilon_2$. As shown in Figure~\ref{F_convergence}, when $\varepsilon_1$ is increased from $0.01$ to $0.3$, we observe faster convergence. However, when we further increase $\varepsilon_1$ beyond $0.4$, an oscillatory behavior is observed.

\begin{figure}
\centering
\includegraphics[trim = 0mm 0mm 0mm 0mm, clip, scale=0.45]{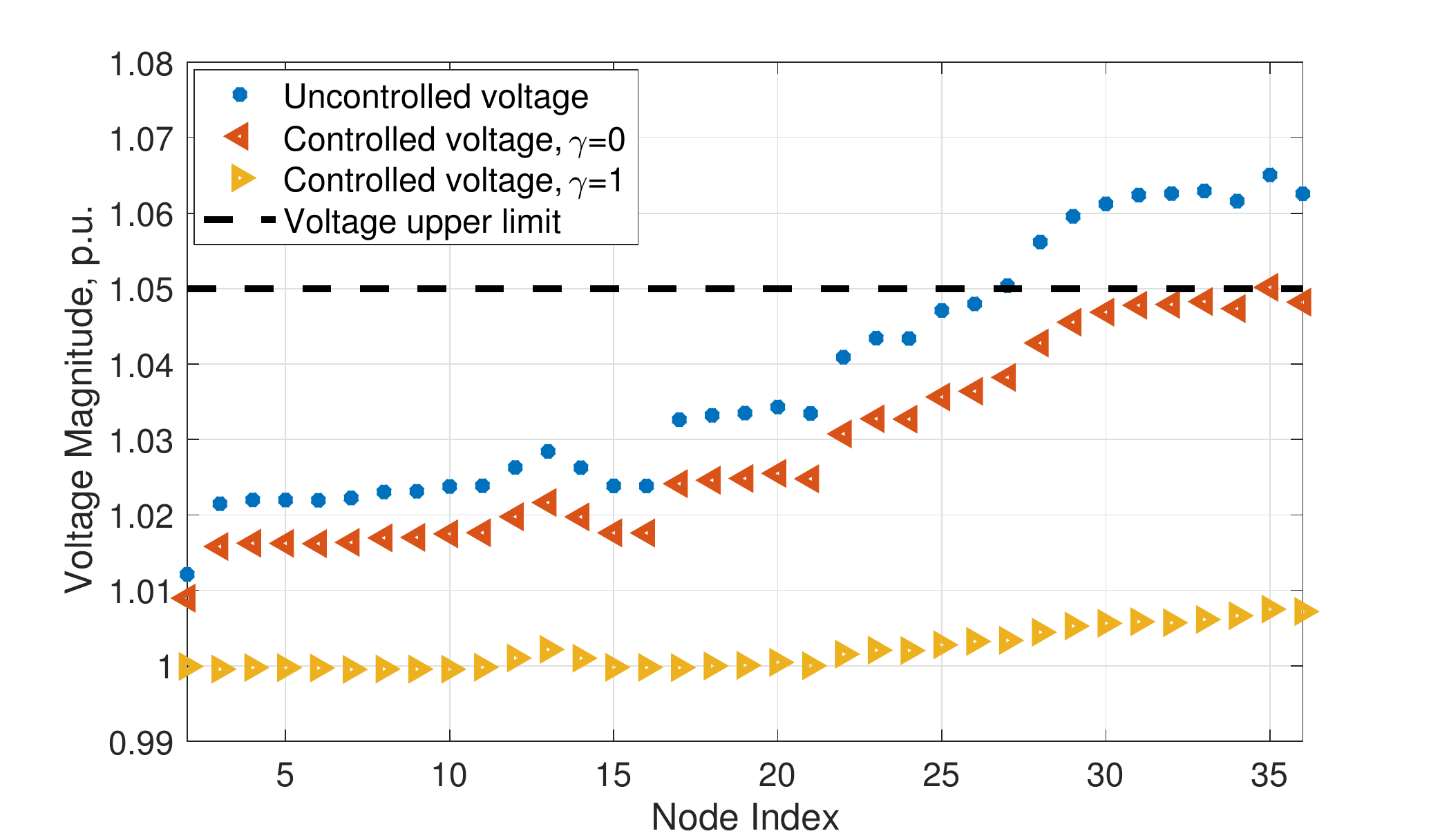} 
\caption{Controlled and uncontrolled voltages at all buses at noon.}
\label{F_convergence2}
\vspace{-.2cm}
\end{figure}

\begin{figure}
	\centering
	\includegraphics[trim = 0mm 0mm 0mm 0mm, clip, scale=0.33]{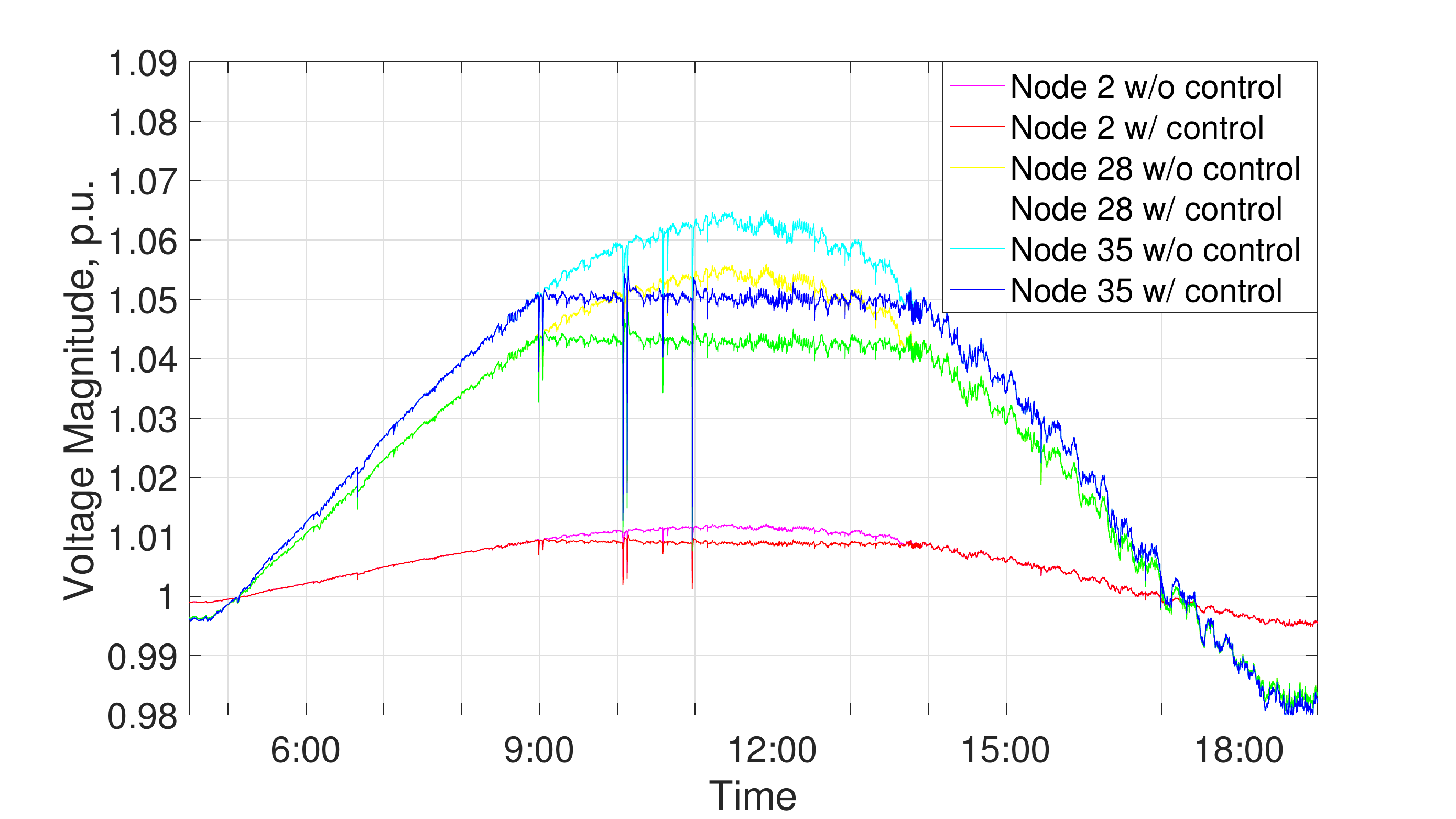} 
	\caption{Controlled and uncontrolled voltages at nodes 2, 28, and 35 from 4:30~am to 7:00~pm with $\gamma=0$ and $K=1$.}
	\label{time_gamma0}
%	\includegraphics[trim = 0mm 0mm 0mm 0mm, clip, scale=0.33]{online_k5g0} 
%	\caption{Controlled and uncontrolled voltages at nodes 2, 28, and 35 from 4:30~am to 7:00~pm with $\gamma=0$ and $K=5$.}
%	\label{time_gamma0k5}
	\includegraphics[trim = 0mm 0mm 0mm 0mm, clip, scale=0.33]{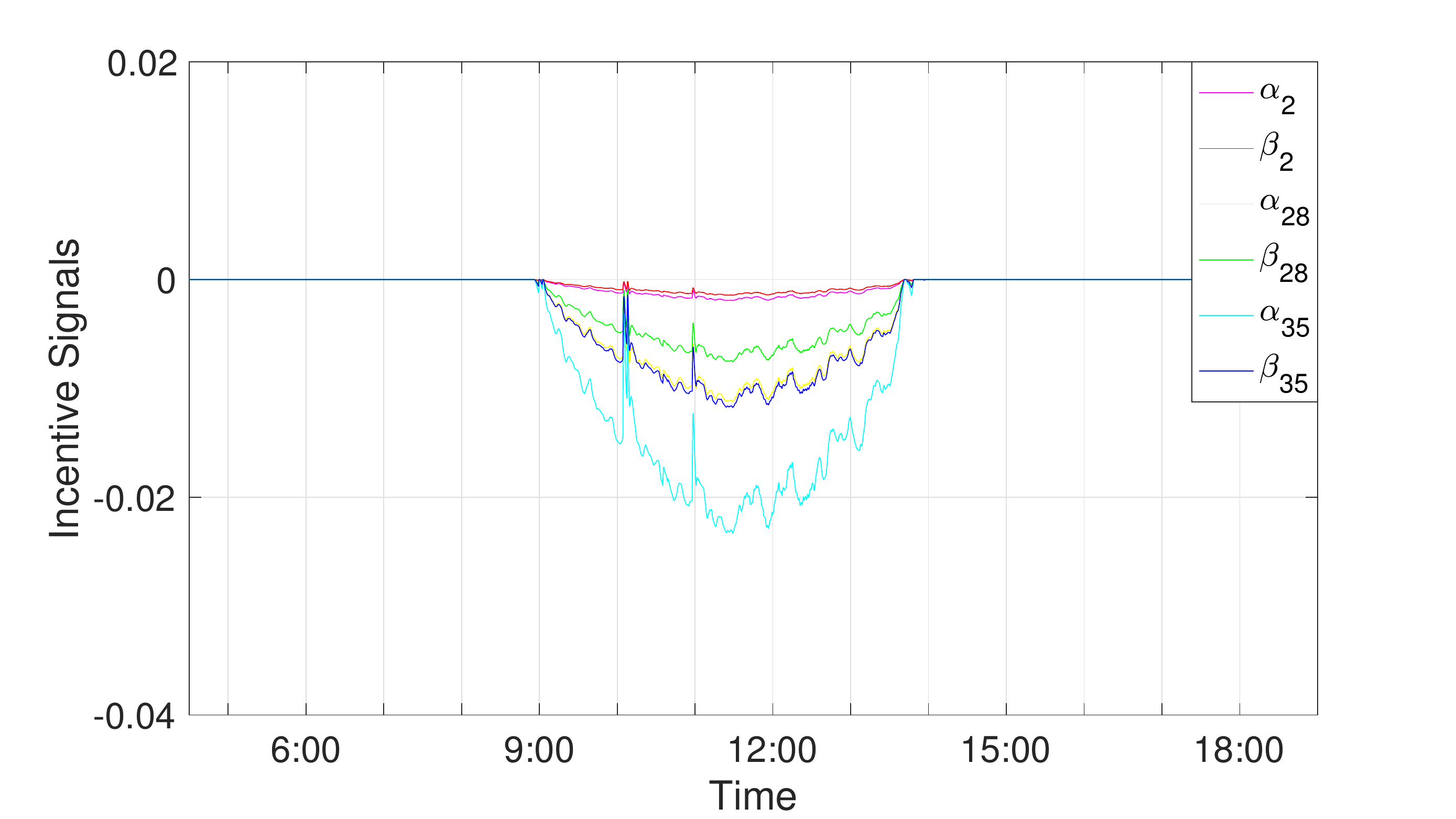} 
	\caption{The incentive signals at nodes 2, 28, and 35 from 4:30~am to 7:00~pm with $\gamma=0$ and $K=1$.}
	\label{F_signal0}
	\includegraphics[trim = 0mm 0mm 0mm 0mm, clip, scale=0.33]{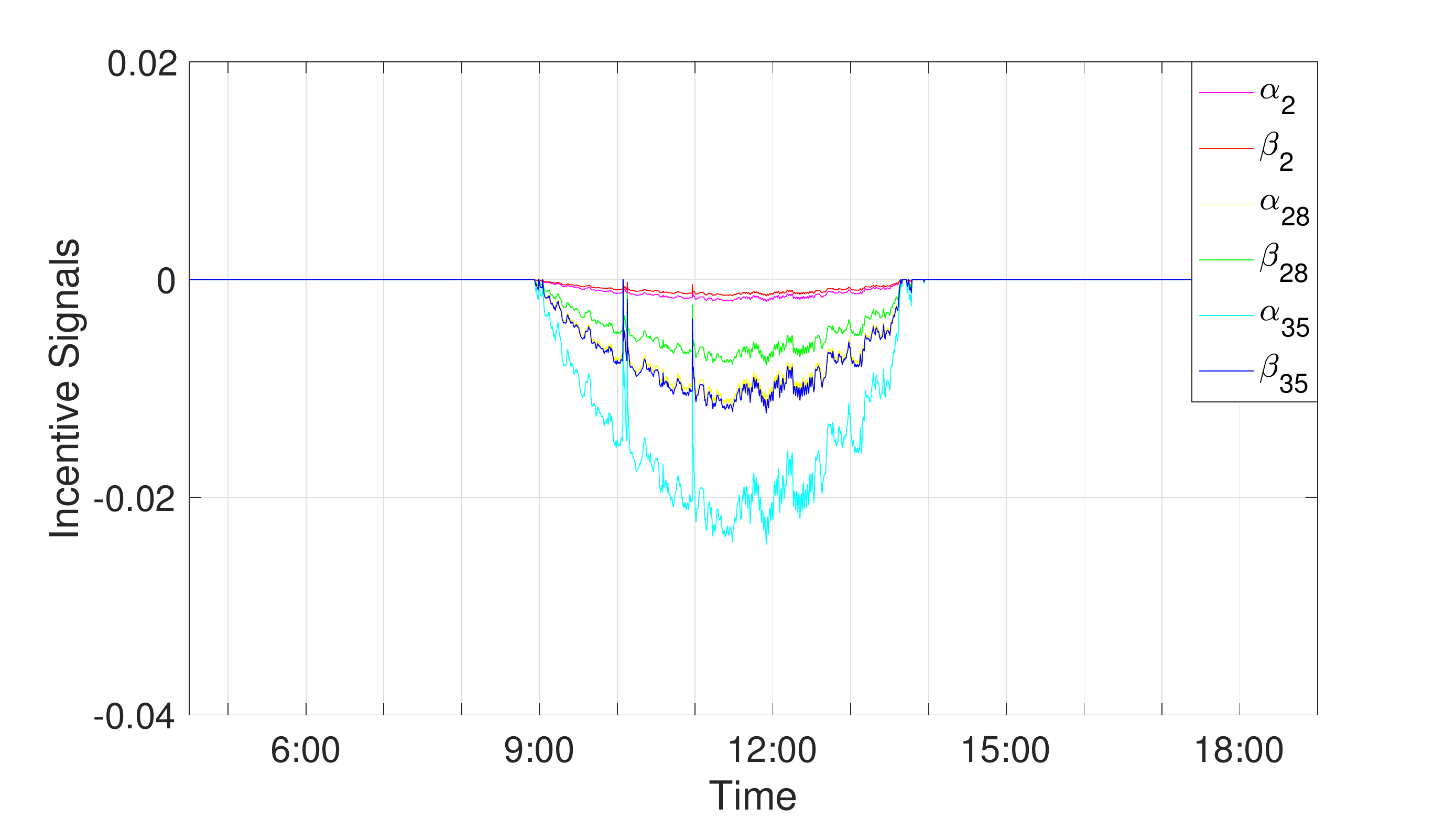} 
	\caption{The incentive signals at nodes 2, 28, and 35 from 4:30~am to 7:00~pm with $\gamma=0$ and  $K=5$.}                                               	\label{F_signal0_K5}
\end{figure}

\subsubsection{Voltage regulation}
The results are plotted in Figure~\ref{F_convergence2} corresponding to the case where $\varepsilon_1=\varepsilon_2=0.01$. We show voltage profiles in three scenarios: (i) uncontrolled setting, where the PV systems operate at unity power factor and inject the maximum available power without any curtailment (blue dots), (ii) controlled voltages with $\gamma=0$ (red dots), and (iii) controlled voltages with $\gamma=1$ (yellow dots). 
It is clear that in the uncontrolled case (i) the voltage values exceed the limit of 1.05~p.u.  (black dashed line) due to large reverse power flows, while the controlled scenarios (ii) and (iii) show voltage within limits.
Furthermore, voltage values achieved by (iii) are closer to the nominal value than those by (ii), because (iii) also penalizes voltage deviation from 1~p.u.

\begin{figure}
	\centering
	\includegraphics[trim = 0mm 0mm 0mm 0mm, clip, scale=0.33]{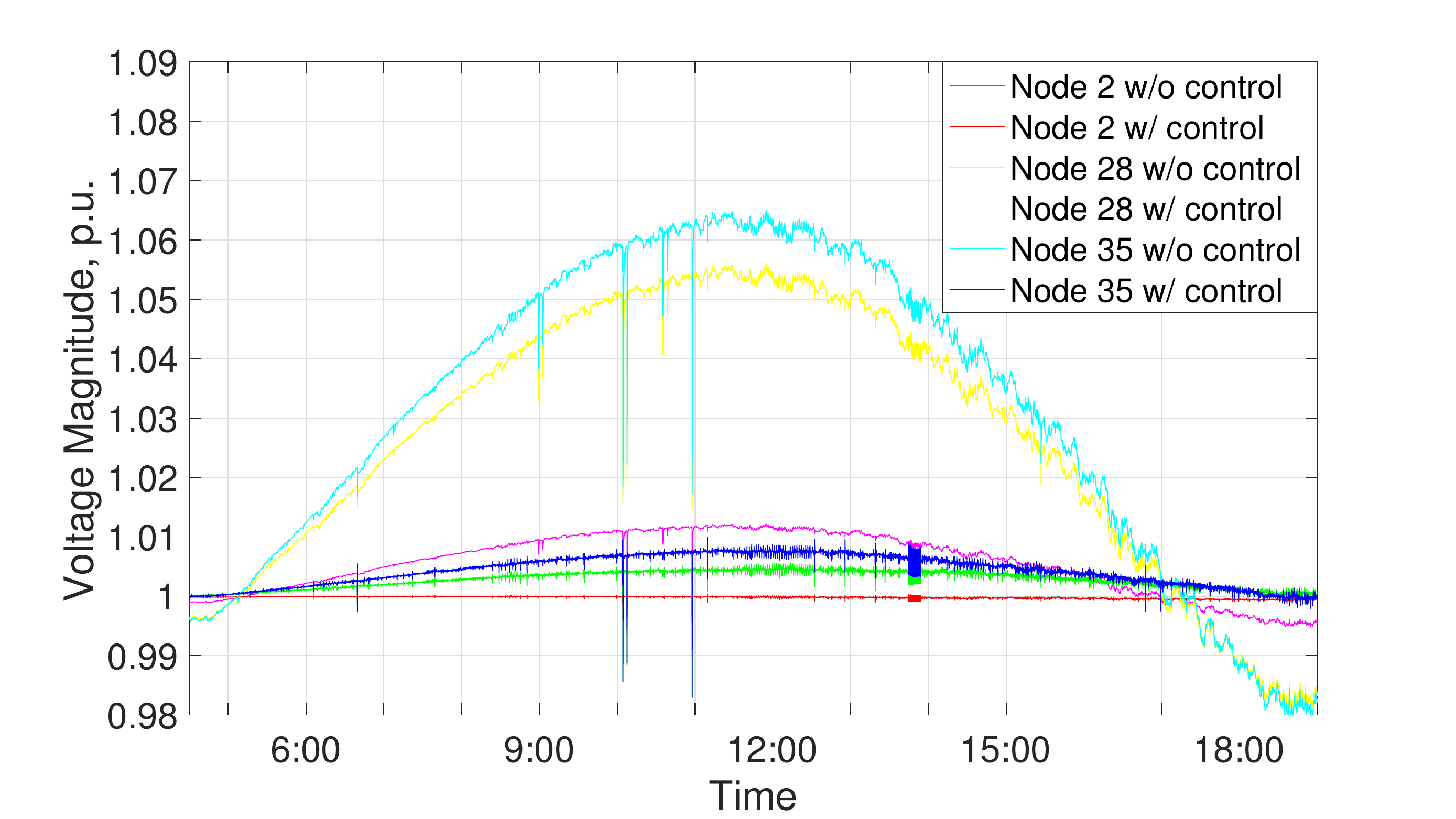} 
	\caption{Controlled and uncontrolled voltages at nodes 2, 28, and 35 from 4:30 am to 7:00 pm with $\gamma=1$ and $K=1$.}
	\label{time_gamma1}
%	\includegraphics[trim = 0mm 0mm 0mm 0mm, clip, scale=0.33]{online_k5g1} 
%	\caption{Controlled and uncontrolled voltages at nodes 2, 28, and 35 from 4:30 am to 7:00 pm with $\gamma=1$ and $K=5$.}
%	\label{time_gamma1k5}
	\includegraphics[trim = 0mm 0mm 0mm 0mm, clip, scale=0.33]{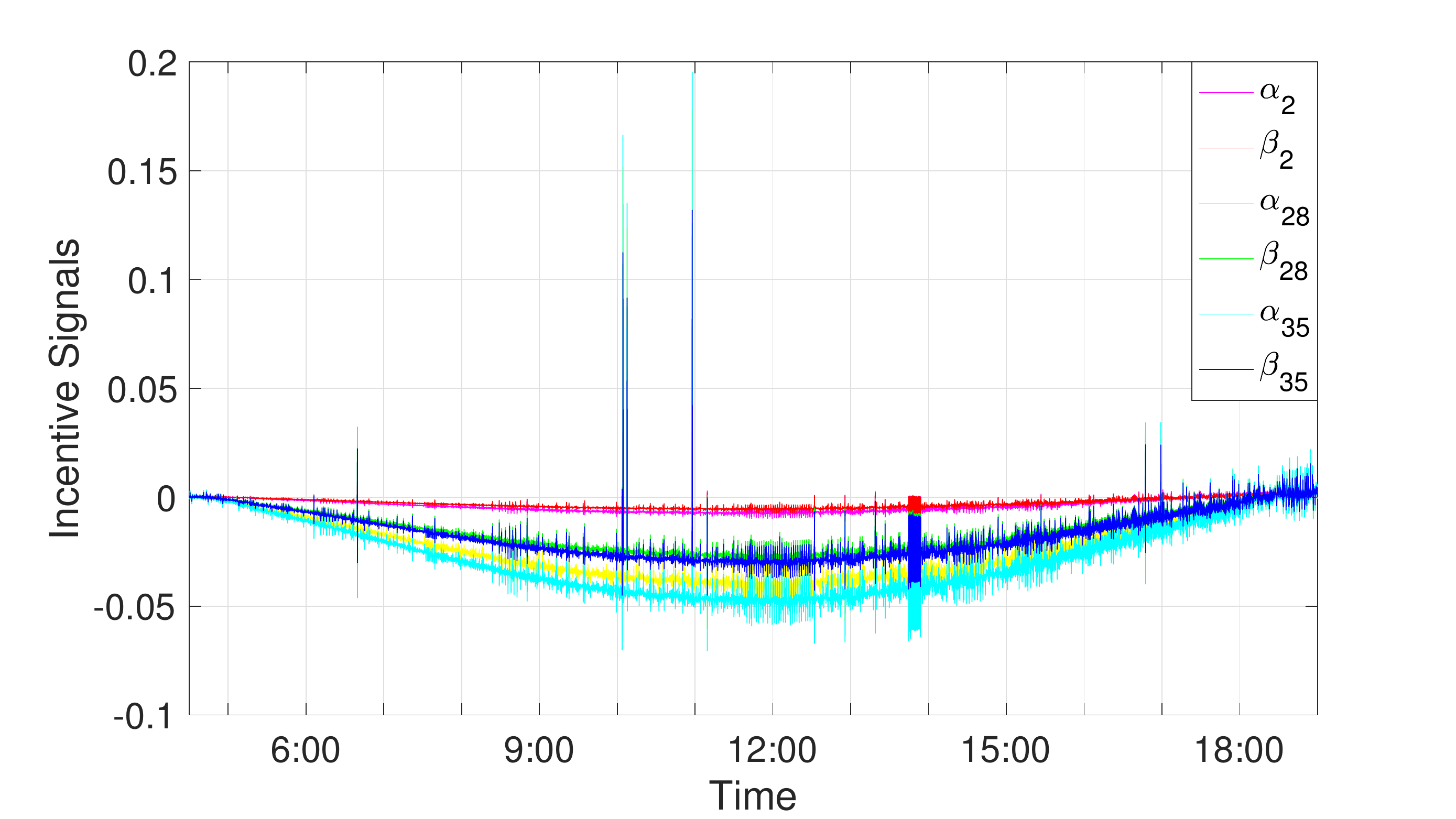} 
	\caption{The incentive signals at nodes 2, 28, and 35 from 4:30~am to 7:00~pm with $\gamma=1$ and $K=1$.}
	\label{F_signal1}
	\includegraphics[trim = 0mm 0mm 0mm 0mm, clip, scale=0.33]{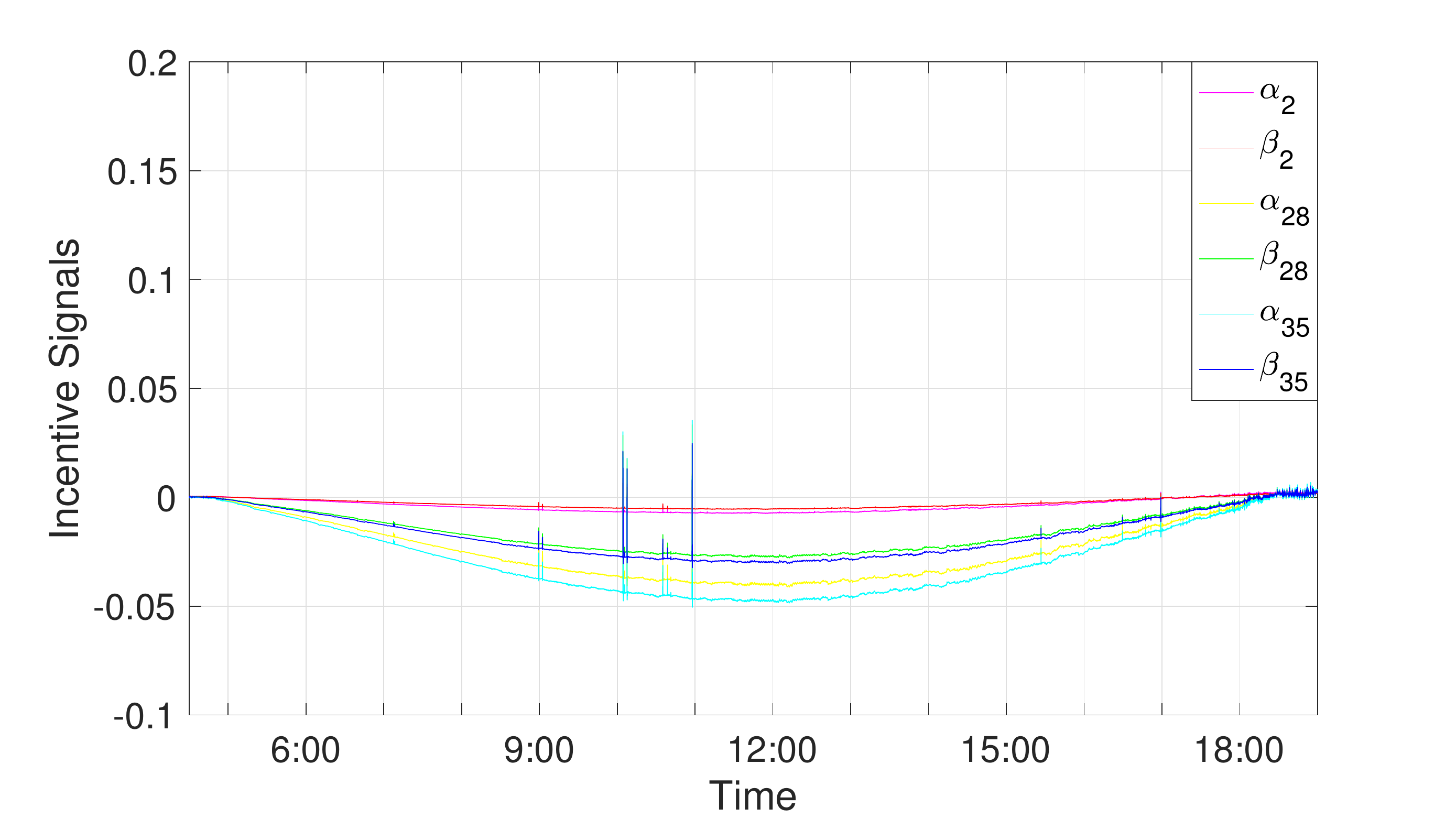} 
	\caption{The incentive signals at nodes 2, 28, and 35 from 4:30~am to 7:00~pm with $\gamma=1$ and $K=5$.}
	\label{F_signal1_K5}
\vspace{-.2cm}
\end{figure}

\subsection{Online Algorithm}\label{sec:numvary}

Next, Algorithm~2 is tested based on the irradiance and load profiles shown in Figure~\ref{F_VPP_paper_load}. One iteration (i.e., $K=1$) is performed every second (i.e., $h = 1$ second). In the following,  the performance of the proposed online algorithm are illustrated for both cases of $\gamma = 0$ and $\gamma = 1$. We will provide the voltage profiles as well as the profiles of the incentive signals. In addition, incentive signal profiles under $K=5$ are provided for comparative purpose.

\subsubsection{$\gamma=0$}
In this case, the function $D(v)$ is disregarded. The voltage profiles obtained when the PV inverters operate according to business-as-usual practices and when they implement the proposed Algorithm~2  are provided for nodes 2, 28, and 35 in Fig.~\ref{time_gamma0}. In the uncontrolled case, voltage values exceed the upper limit during the mid-day hours because of the reverse power flows; in contrast, the proposed algorithm enforces voltage regulation, even when only one iteration is performed every second. Fig.~\ref{F_signal0} illustrates that the incentive  signals become nonzero when voltages would violate the limits. The negative signals incentivize the customers to curtail active power and produce negative reactive power. 

\subsubsection{$\gamma=1$}
The voltage profiles obtained when $\gamma = 1$ are plotted in Fig. \ref{time_gamma1}. In this case, the voltage magnitudes are driven closer to the nominal value, at the cost of curtailing more real power and absorbing more reactive power.  Voltages are clearly within limits. 

\subsubsection{$K=5$}
We repeat the simulations with five iterations per second (i.e., $K=5$), and plot the signal profiles based on the last iteration of each second. The results are presented in Fig.~\ref{F_signal0_K5} and Fig.~\ref{F_signal1_K5}. As expected, the incentive signals generated with more iterations provide more accurate (see Fig.~\ref{F_signal0} vs Fig.~\ref{F_signal0_K5}) and more steady (see Fig.~\ref{F_signal1} vs Fig.~\ref{F_signal1_K5}) tracking of voltage changes.
The resultant controlled voltage profiles with $K=5$ are omitted because they are not largely different from Fig.~\ref{time_gamma0} and Fig.~\ref{time_gamma1} with $K=1$; nevertheless, by examining the results in details we have found that voltage profiles with $K=5$ commit less voltage violation when $\gamma=0$, and enjoy smaller (time) variance in general. 

%%%%%%%%%%%%%%%%%
\section{Conclusion}
\label{sec:conclusions}
%%%%%%%%%%%%%%%%%
This paper considers a time-varying social welfare maximization problem modeling network operator and DER-owners operational  objectives as well as voltage constraints. The formulated problem is non-convex; however, we  propose a convex relaxation and we provide conditions under which the optimal solutions of the relaxed problem coincide with the optimal points of the non-convex social-welfare problem. We then design distributed algorithms to identify the solutions of the time-varying social welfare maximization problem. An online  algorithm is {proposed} to enable tracking of the solutions in the presence of fast time-varying operational conditions and changing optimization objectives. Stability of the proposed schemes is analytically established and numerically corroborated. {Future research directions include the extension of the proposed framework to control DERs with discrete power levels and devices involving discrete decision variables.}

{
\appendix 

Complementing the convergence results of Theorem~\ref{the:convergence}, in the following we provide sufficient conditions on the stepsizes $\varepsilon_1$ and $\varepsilon_2$ that guarantee the operator $\hat{T}$ in (\ref{eq:mappingT}) to be a contraction.

\begin{theorem}\label{the:stepsizebound}
If the stepsizes $\varepsilon_1$ and $\varepsilon_2$ satisfy the following conditions for any $i\in\hN$:
\begin{subequations} \label{eq:stepsizecon1}
\begin{eqnarray}
&&\hspace{-12mm}\varepsilon_2<\frac{1}{2\sum_{j\in\hN}R_{ij}},\  \varepsilon_1\!\nabla^2_{p_i}(C_i+\gamma D)>2\varepsilon_2\sum_{j\in\hN}R_{ij},\\[-3pt]
%&&\varepsilon_1\nabla^2_{p_i}>2\varepsilon_2\sum_{j\in\hN}R_{ij},\\[-3pt]
&&\hspace{-12mm}\varepsilon_1\nabla^2_{p_i}(C_i+\gamma D)+2\varepsilon_2\!\sum_{j\in\hN}R_{ij}<2,\\[-6pt]
&&\hspace{-12mm}\varepsilon_2<\frac{1}{2\sum_{j\in\hN}X_{ij}},\ \varepsilon_1\!\nabla^2_{q_i}(C_i+\gamma D)>2\varepsilon_2\sum_{j\in\hN}X_{ij},\\[-3pt]
%&&\varepsilon_1\nabla^2_{q_i}>2\varepsilon_2\sum_{j\in\hN}X_{ij},\\[-3pt]
&&\hspace{-12mm}\varepsilon_1\nabla^2_{q_i}(C_k+\gamma D)+2\varepsilon_2\!\sum_{j\in\hN}X_{ij}<2,\\[-6pt]
&&\hspace{-12mm}\varepsilon_1<\frac{1}{\sum_{j\in\hN}(R_{ij}+X_{ij})},\ \varepsilon_1\!\sum_{j\in\hN}(R_{ij}+X_{ij})>\varepsilon_2\phi,\\[-2pt]
&&\hspace{-12mm}\varepsilon_1\sum_{j\in\hN}(R_{ij}+X_{ij})+\varepsilon_2\phi<2,
\end{eqnarray}
\end{subequations}
then $\hat{T}$ is a contraction.\hfill$\Box$
\end{theorem}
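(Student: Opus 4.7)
The plan is to establish that $\hat{T}$ in (\ref{eq:mappingT}) is a contraction by bounding the operator norm of the Jacobian of the \emph{pre-projection} update map uniformly below $1$ on $\mathcal{Z}\times\mathbb{R}^{2N}_+$, and then invoking the non-expansiveness of the Euclidean projection onto the closed convex set $\mathcal{Z}\times\mathbb{R}^{2N}_+$.

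First I would expose the structure of the update. Writing the linear inequality constraint as $g(z)=Gz+c$ with $G=\bigl[\begin{smallmatrix}-R & -X\\ R & X\end{smallmatrix}\bigr]$ and $c$ the constant part, the primal-dual vector field driving (\ref{eq:mappingT}) is
\[
F(y)=\begin{bmatrix}\nabla_z f(z)+G^{\intercal}\mu\\ -Gz-c+\phi\mu\end{bmatrix},
\qquad
J(y)=\nabla F(y)=\begin{bmatrix}H(z) & G^{\intercal}\\ -G & \phi I\end{bmatrix},
\]
where $H(z):=\nabla^2 f(z)$ is the Hessian of $f=\sum_i C_i+\gamma D$. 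With $E:=\mathrm{blkdiag}(\varepsilon_1 I,\varepsilon_2 I)$, the Jacobian of the unprojected step $y\mapsto y-EF(y)$ is
\[
M(y)=I-EJ(y)=\begin{bmatrix}I-\varepsilon_1 H(z) & -\varepsilon_1 G^{\intercal}\\ \varepsilon_2 G & (1-\varepsilon_2\phi)I\end{bmatrix}.
\]

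Next I would prove $\sup_{y}\|M(y)\|<1$ in an induced matrix norm via a row-wise Gerschgorin-style argument. Grouping the rows by variable type ($p_i$, $q_i$, $\underline{\mu}_i$, $\overline{\mu}_i$) and passing through a diagonal rescaling that matches the $\varepsilon_1,\varepsilon_2$ weights of the two blocks, each row of $M$ has a diagonal entry of the form $1-\varepsilon_1\nabla^2_{p_i}(C_i+\gamma D)$, $1-\varepsilon_1\nabla^2_{q_i}(C_i+\gamma D)$, or $1-\varepsilon_2\phi$, and off-diagonal mass controlled by $2\varepsilon_2\sum_j R_{ij}$ (resp.\ $2\varepsilon_2\sum_j X_{ij}$) for $p_i$ (resp.\ $q_i$) rows, and by $\varepsilon_1\sum_j(R_{ij}+X_{ij})$ for the dual rows. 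The inequalities in (\ref{eq:stepsizecon1}) are then read directly as: (i) each diagonal entry lies strictly between $-1$ and $1$ (hence each pair of inequalities, one lower- and one upper-bounding the $\varepsilon_1\nabla^2$ or $\varepsilon_2\phi$ term, is necessary); and (ii) the off-diagonal row mass is strictly smaller than the slack to $1$, so every Gerschgorin disk sits strictly inside the open unit disk. Taking the supremum of the resulting row sums over $y$ (the Hessian terms are bounded by Assumptions~\ref{as:Cinv}--\ref{as:Dinv}) yields a uniform modulus $\Delta<1$.

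Finally, since the Euclidean projection onto the closed convex set $\mathcal{Z}\times\mathbb{R}^{2N}_+$ is $1$-Lipschitz, the Lipschitz constant of $\hat{T}$ is bounded by $\sup_{y}\|M(y)\|\le\Delta<1$, proving contraction. The main obstacle will be handling the off-diagonal contributions from $\nabla^2(\gamma D)$: because $D$ depends on $\hat{v}=Rp+Xq+a$, its Hessian in $z$ is dense through $\gamma R^{\intercal}R$, $\gamma X^{\intercal}X$ and cross terms, and these must be absorbed into the row budgets of (\ref{eq:stepsizecon1}) without inflating them. This is where the $2\varepsilon_2\sum_j R_{ij}$ and $2\varepsilon_2\sum_j X_{ij}$ terms come in: they are sized precisely to soak up the $\nabla^2 D$ cross terms together with the $G^{\intercal}$ coupling, via a weighted-norm bookkeeping that pairs each primal off-diagonal contribution against the corresponding row of $R$ or $X$. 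Once this bookkeeping is written out, verifying that (\ref{eq:stepsizecon1}) yields strict row-wise diagonal dominance is immediate and finishes the proof.
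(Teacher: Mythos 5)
Your route is essentially the paper's own: the paper also proves contraction by imposing $\sum_j|\nabla\hat{T}_{ij}|<1$ for every row $i$ (i.e., an $\ell_\infty$-induced-norm bound on the Jacobian) and reading the conditions in (\ref{eq:stepsizecon1}) off as row-wise strict diagonal dominance, with the diagonal entries $1-\varepsilon_1\nabla^2_{p_i}(\cdot)$, $1-\varepsilon_1\nabla^2_{q_i}(\cdot)$, $1-\varepsilon_2\phi$ and the off-diagonal budgets $2\varepsilon_2\sum_jR_{ij}$, $2\varepsilon_2\sum_jX_{ij}$, $\varepsilon_1\sum_j(R_{ij}+X_{ij})$. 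You are in fact more explicit than the paper on two points it glosses over: the diagonal similarity with ratio $\varepsilon_2/\varepsilon_1$ that is needed so that the primal rows see $\varepsilon_2$-weighted coupling and the dual rows see $\varepsilon_1$-weighted coupling, and the $1$-Lipschitz projection onto $\mathcal{Z}\times\mathbb{R}^{2N}_+$ that transfers the bound from the unprojected map to $\hat{T}$.

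The one step you defer, however, is resolved incorrectly as written. After the rescaling, the off-diagonal mass of the $p_i$-row contributed by the bilinear primal--dual coupling $G^{\intercal}$ is exactly $\varepsilon_2\sum_jR_{ij}+\varepsilon_2\sum_jR_{ij}=2\varepsilon_2\sum_jR_{ij}$: the factor $2$ counts the two dual blocks $\underline{\mu}$ and $\overline{\mu}$, not the Hessian of $D$. Hence the budget appearing in (\ref{eq:stepsizecon1}) is already exhausted by the constraint coupling alone, and there is no slack left to ``soak up'' the dense off-diagonal entries of $\varepsilon_1\gamma\nabla^2_{zz}D=\varepsilon_1\gamma\,[R~X]^{\intercal}\nabla^2_{\hat v}D\,[R~X]$ (nor a possible $\nabla^2_{p_iq_i}C_i$ cross term). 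For the quadratic $D$ of Section~\ref{sec:scenarios} these entries are $\varepsilon_1\gamma(R^{\intercal}R)_{ij}$, all strictly positive, so the true row sums strictly exceed what (\ref{eq:stepsizecon1}) controls. To close the argument one must either enlarge the left-hand sides of (\ref{eq:stepsizecon1}) by $\varepsilon_1\sum_{j\neq i}|\nabla^2_{z_iz_j}(\sum_kC_k+\gamma D)|$, or restrict to $\gamma=0$ with separable $C_i$. Note that the paper's proof makes the same silent omission (its three displayed inequalities contain no term for the off-diagonal Hessian of $f$), so as a comparison of methods your proposal and the paper coincide --- including in the step that neither actually completes.
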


\begin{proof}
Let $\nabla\hat{T}\in\mathbb{R}^{4N\times 4N}$ denote Jacobian matrix of $\hat{T}$, and let $\nabla\hat{T}_{ij}$ denote the  element on row $i$ and column $j$ of matrix $\nabla\hat{T}$. To prove that $\hat{T}$ is a contraction, it is sufficient to have the following condition:
\begin{eqnarray}
\sum_j |\nabla\hat{T}_{ij}|<1,\ \forall i,\nonumber
\end{eqnarray}
which is satisfied if the following three inequalities hold:
\begin{subequations}\label{eq:stepsizecon2}
\begin{eqnarray}
\big|1-\varepsilon_1(\nabla^2_{p_i}(C_i+\gamma D))\big|+2\varepsilon_2\sum_{j\in\hN}R_{ij}<1,\\
\big|1-\varepsilon_1(\nabla^2_{q_i}(C_i+\gamma D))\big|+2\varepsilon_2\sum_{j\in\hN}X_{ij}<1,\\
\big|1-\varepsilon_2\phi\big|+\varepsilon_1\sum_{j\in\hN}(R_{ij}+X_{ij})<1.
\end{eqnarray}
\end{subequations}
Conditions (\ref{eq:stepsizecon1}) and (\ref{eq:stepsizecon2}) are in fact equivalent. Therefore, (\ref{eq:stepsizecon1}) are sufficient for $\hat{T}$ to be a contraction.
\end{proof}
\begin{remark}
Conditions (\ref{eq:stepsizecon1}) together with assumptions in this paper guarantee the existence of small enough step sizes $\varepsilon_1$ and $\varepsilon_2$ to achieve convergence. This result is consistent with Theorem~\ref{the:convergence}.
\end{remark}
}

%%%%%%%%%%%%%%%%%%%%%%%%%%%%%%%%%%%%%%%%%%%%%
\IEEEtriggeratref{32} % This command breaks the reference manually so that no entry is broken.
\bibliographystyle{IEEEtran}
\bibliography{biblio_pricing.bib,biblio_2.bib}

\end{document}